\newtheorem{theorem}{Theorem}[section]
\newtheorem{corollary}[theorem]{Corollary}
\newtheorem{proposition}[theorem]{Proposition}
\newtheorem{question}[theorem]{Question}
\def\ev{\mathop{\rm ev}\nolimits}
\def\1{{\bf 1}}
\def\b{\mbox{\boldmath $b$}}
\def\c{\mbox{\boldmath $c$}}
\def\d{\mbox{\boldmath $d$}}
\def\m{\mbox{\boldmath $m$}}
\def\vu{\mbox{\boldmath $u$}}
\DeclareMathOperator{\dgr}{dgr}
\DeclareMathOperator{\rank}{rank}
\DeclareMathOperator{\spec}{sp}
\def\vecnu{\mbox{\boldmath $\nu$}}
\def\C{\mathbb C}
\def\Real{\mathbb R}
\def\Z{\mathbb Z}
\date{}
\begin{document}
\title{Optimization of eigenvalue bounds for the independence and chromatic number of graph powers}
\author{A. Abiad$^{a,b,c}$, G. Coutinho$^d$, M. A. Fiol$^e$, B. D. Nogueira$^d$, S. Zeijlemaker$^a$ \\
	$^{a}${\small Department of Mathematics and Computer Science} \\
	{\small Eindhoven University of Technology, Eindhoven, The Netherlands}\\
	$^{b}${\small Department of Mathematics: Analysis, Logic and Discrete Mathematics} \\
	{\small Ghent University, Ghent, Belgium}\\
	$^{c}${\small Department of Mathematics and Data Science} \\
	{\small Vrije Universiteit Brussels, Brussels, Belgium}\\
	{\small {\tt{\{a.abiad.monge,s.zeijlemaker\}@tue.nl}}}\\
	$^d${\small Department of Computer Science}\\
	{\small Federal University of Minas Gerais, Belo Horizonte, Brazil}\\
	{\small{\tt {\{gabriel,bruno.demattos\}@dcc.ufmg.br}}}\\
	$^{e}${\small Departament of Mathematics} \\
	{\small Polytechnic University of Catalonia, Barcelona} \\
	{\small Barcelona Graduate School of Mathematics}\\
	{\small{\tt{miguel.angel.fiol@upc.edu}}}}

\maketitle

\begin{abstract}
The $k^{\text{th}}$ power of a graph $G=(V,E)$, $G^k$, is the graph whose vertex set is $V$ and in which two distinct vertices are adjacent if and only if their distance in $G$ is at most $k$. 
This article proves various eigenvalue bounds for the independence number and chromatic number of $G^k$ which purely depend on the spectrum of $G$, together with a method to optimize them. Our bounds for the $k$-independence number also work for its quantum counterpart, which is not known to be a
computable parameter in general, thus justifying the use of integer programming to optimize them. Some of the bounds previously known in the literature follow as a corollary of our main results. Infinite families of graphs where the bounds are sharp are presented as well.
\end{abstract}

\section{Introduction}

For a positive integer $k$, the $k^{\text{th}}$ \emph{power of a graph} $G =(V, E)$, denoted by $G^k$, is a graph with vertex set $V$ in which two distinct elements of $V$ are joined by an edge if there is a path in $G$ of length at most $k$ between them. 
For a nonnegative integer $k$, a $k$-\emph{independent set} in a graph $G$ is a vertex set such that the distance between any two distinct vertices on it is bigger than $k$. Note that the $0$-independent set is $V(G)$ and an $1$-independent set is an independent set. The $k$-\emph{independence number} of a graph $G$, denoted by $\alpha_k(G)$, is the maximum size of a $k$-independent set in $G$. Note that $\alpha_k(G)=\alpha(G^k)$.

The $k$-independence number is an interesting graph-theoretic parameter that is closely related to coding theory, where codes relate to $k$-independent sets in Hamming graphs \cite[Chapter 17]{MWS}. The $k$-independence number of a graph is also directly related to the $k$-\emph{distance chromatic number}, denoted by $\chi_k(G)$, which is just the chromatic number of $G^k$. Hence, $\chi_k(G)=\chi(G^k)$. It is well known that $\alpha_1(G) = \alpha(G) \geq n/\chi(G)$. Therefore, lower bounds on the  $k$-distance chromatic number can be obtained by finding upper bounds on the corresponding $k$-independence number, and vice versa. The parameter $\alpha_k$ has also been studied in several other contexts (see \cite{AF2004, DZ2003,FG1998,FGY1996,FGY1997,N} for some examples) and it is related to other combinatorial parameters, such as the average distance \cite{fh97}, the packing chromatic number \cite{GHHHR2008}, the injective chromatic number \cite{HKSS2002}, the strong chromatic index \cite{M2000} and the $d$-diameter \cite{CDS}. Recently, the $k$-independence number has also been related to the beans function of a connected graph \cite{en}.

The study of the $k$-independence number has attracted quite some attention. Firby and Haviland \cite{fh97} proved an upper bound for $\alpha_k(G)$ in an $n$-vertex connected graph. In 2000, Kong and Zhao \cite{kz1993} showed that for every $k\geq 2$, determining $\alpha_k(G)$ is NP-complete for general graphs. They also showed that this problem remains NP-complete for
regular bipartite graphs when $k\in \{2, 3, 4\}$ \cite{kz2000}.  For each fixed integer $k \geq 2$ and $r\geq 3$, Beis, Duckworth
and Zito \cite{bdz2005} proved some upper bounds for $\alpha_k(G)$ in random $r$-regular graphs. O, Shi, and Taoqiu \cite{OShiTaoqiu2019} showed sharp upper bounds for the $k$-independence number in an $n$-vertex
$r$-regular graph for each positive integer $k\geq 2$ and $r\geq 3$. The case of $k=2$ has also received some attention: Duckworth and Zito \cite{DZ2003} showed a heuristic for finding a large 2-independent set of regular graphs, and Jou, Lin and Lin \cite{JLL2020} presented a sharp upper bound for
the 2-independence number of a tree.

Most of the existing algebraic work on bounding $\alpha_k$ is based on the following two classic results. Let $G$ be a graph with $n$ vertices and adjacency matrix eigenvalues $\lambda_1\ge \lambda_2\ge \cdots \ge \lambda_n$. The first well-known spectral bound (or {\em `inertia bound'\/}) for the independence number $\alpha=\alpha_1$ of $G$ is due to Cvetkovi\'c \cite{c71}:

\begin{equation}
\label{bound:cvetkovic}
\alpha\le \min \{|\{i : \lambda_i\ge 0\}| , |\{i : \lambda_i\le 0\}|\}.
\end{equation}

When $G$ is regular, another well-known bound ({\em `ratio bound'\/}) is due to Hoffman (unpublished):
\begin{equation}
\label{bound:hoffman}
\alpha \leq \frac{n}{1-\frac{\lambda_1}{\lambda_n}}.
\end{equation}


Abiad, Cioab\u{a}, and Tait \cite{act2016} obtained the first two spectral upper bounds for the $k$-independence number of a graph: an inertial-type bound and a ratio-type bound. They constructed graphs that attain equality for their first bound and showed that their second bound compares favorably to previous bounds on the $k$-independence number. Abiad, Coutinho, and Fiol \cite{acf2019} extended the  spectral bounds from \cite{act2016}. Wocjan, Elphick, and Abiad \cite{wea2020} showed that the inertial bound by Cvetkovi\'c is also an upper bound for the quantum $k$-independence number. Recently, Fiol \cite{fiol20} introduced the minor polynomials in order to optimize, for $k$-partially walk-regular graphs, a ratio-type bound.

In this article we present several sharp inertial-type and ratio-type bounds for $\alpha_k$ and $\chi_k$ which depend purely on the eigenvalues of $G$, and we propose a method to optimize such bounds using Mixed Integer Programming (MILP).
The fact that the inertial-type of bound that we consider is also valid to upper bound the quantum $k$-independence number [Theorem 7, \cite{wea2020}] justify the method we propose in this paper to optimize our bounds. It is not known whether quantum counterparts of $\alpha$ or $\chi$ are computable functions \cite{MancinskaRoberson}, and our bounds sandwich these parameters with the classical versions. 

If one wants to use the classical spectral upper bounds on the independence number (\ref{bound:cvetkovic}) and (\ref{bound:hoffman}) to bound $\alpha(G^k)=\alpha_k(G)$, one needs to know how the spectrum of $G^k$ relates to the spectrum of $G$. In the case when the spectrum of $G$ and $G^k$ are related, we show that previous work by Fiol \cite{f02} can be used to derive a sharp spectral bound for regular graphs which concerns the following problem posed by Alon and Mohar \cite{am2002}: among all graphs G of maximum degree at most $d$ and girth at least $g$, what is the largest possible value of $\chi(G^k)$? 

In general, though, the spectra of $G^k$ and $G$ are not related. We also prove various eigenvalue bounds for $\alpha_k$ and $\chi_k$ which only depend on the spectrum of $G$. In particular, our bounds are functions of the eigenvalues of $A$ and of certain counts of closed walks in $G$ (which can be written as linear combinations of the eigenvalues and eigenvectors of $A$). Under some extra assumptions (for instance, that of partial walk-regularity), we improve the known spectral inertial-type bounds for the $k$-independence number. Our approach is based
on a MILP implementation which finds the best polynomials that minimize the bounds. For some cases and some infinite families of graphs, we show that our bounds are sharp, and also in other cases that they coincide, in general, with the Lov\'asz
theta number.

\section{A particular case: the spectrum of $G^k$ and $G$ are related}

Our main motivation for this section comes from distance colorings, which have received a lot of attention in the literature. In particular, special efforts have been put on the following question of Alon and Mohar \cite{am2002}:

\begin{question}\label{que:AlonMohar}
	What is the
largest possible value of the chromatic number $\chi(G^k)$ of $G^k$, among all graphs $G$ with maximum degree at most $d$ and girth (the length of a shortest cycle contained in $G$) at least $g$?
\end{question}

The main challenge in Question \ref{que:AlonMohar} is to provide
examples with large distance chromatic number (under the condition of
girth and maximum degree). For $k = 1$, this question was essentially a long-standing problem of Vizing, one that stimulated much of the work on the chromatic number of bounded degree triangle-free graphs, and was eventually settled asymptotically by Johansson \cite{jo1996} by using the probabilistic method. The case $k = 2$ was considered and settled asymptotically by Alon and
Mohar \cite{am2002}.

The aim of this section is to show the first eigenvalue bounds on $\chi_k$ which concern Question \ref{que:AlonMohar} for regular graphs and when the spectrum of $G^k$ is related to the one of $G$. The spectra of $G$ and $G^k$ are related when the adjacency matrix of $G^k$ belongs to the algebra generated by the adjacency matrix of $G$, that is, there is a polynomial $p$ such that $p(A(G))=A(G^k)$. For instance, this happens when $G$ is $k$-partially distance polynomial \cite{ddfgg11}. In this framework, and when $\deg p=k$ (or, in particular, when $G$ is $k$-partially distance-regular \cite{ddfgg11}) we can use Proposition \ref{propo:f02} from \cite{f02} to derive spectral bounds. Before stating the results, we need to introduce some concepts and notations.

Let $G=(V,E)$ be a graph with $n=|V|$ vertices, $m=|E|$ edges, and adjacency matrix $A$ with  spectrum
$
\spec G =\spec A=\{\theta_0^{m_0},\theta_1^{m_1},\ldots,\theta_d^{m_d}\},
$
where the different eigenvalues are in decreasing order, $\theta_0>\theta_1>\cdots>\theta_d$, and the superscripts stand for their multiplicities (since $G$ is supposed to be connected, $m_0=1$).
When the eigenvalues are presented with possible repetitions, we shall indicate them by
$
\ev G:  \lambda_1 \geq \lambda_2 \geq \cdots\geq \lambda_n.
$
Let us consider the scalar product in $\Real_d[x]$:
\begin{equation}
\label{escalar-prod}
\langle f,g\rangle_G=\frac{1}{n}\tr(f(A)g(A))=\frac{1}{n}\sum_{i=0}^{d} m_i f(\theta_i)g(\theta_i).
\end{equation}
The so-called {\em predistance polynomials} $p_0(=1),p_1,\ldots, p_d$, which were introduced by Fiol and Garriga in \cite{fg97}, are a sequence of orthogonal polynomials with respect to the above product, with  $\dgr p_i=i$, and they are normalized in such a way that $\|p_i\|_G^2=p_i(\theta_0)$ for $i=0,\ldots,d$.
Therefore, they are uniquely determined, for instance, following the Gram-Schmidt process. These polynomials were used to prove the so-called `spectral excess theorem' for distance-regular graphs, where $p_0(=1),p_1,\ldots, p_d$ coincide with the so-called distance polynomials.

\begin{proposition} \cite{f02}
\label{propo:f02}
Let $G=(V,E)$ be a regular graph with $n$ vertices, spectrum $\spec G=\{\theta_0^{m_0},\theta_1^{m_1},\ldots,\theta_d^{m_d}\}$, and predistance polynomials $p_0,\ldots,p_d$. For a given integer $k\le d$ and a vertex $u\in V$, let $s_k(u)$ be the number of vertices at distance at most $k$ from $u$, and consider the sum polynomial $q_k=p_0+\cdots+p_k$. Then, $q_k(\theta_0)$ is bounded above by the harmonic mean $H_k$ of the numbers $s_k(u)$, that is
$$
q_k(\theta_0)\le H_k=\frac{n}{\sum_{u\in V}\frac{1}{s_k(u)}},
$$
and equality occurs if and only if $q_k(A)=I+A(G^k)$.
\end{proposition}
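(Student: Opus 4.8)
The plan is to work with the matrix $q_k(A)$ directly, exploiting two structural properties of the sum polynomial. First, since $\dgr q_k = k$ and $(A^j)_{uv}=0$ whenever $j<\mathrm{dist}(u,v)$, the entry $(q_k(A))_{uv}$ vanishes whenever $\mathrm{dist}(u,v)>k$; thus $q_k(A)$ is supported exactly on the pairs counted by $s_k$. Second, because the $p_i$ are orthogonal with $\|p_i\|_G^2=p_i(\theta_0)$, expanding $\|q_k\|_G^2=\sum_{i=0}^k\|p_i\|_G^2$ gives the identity $\|q_k\|_G^2=q_k(\theta_0)$, that is, $\frac1n\tr(q_k(A)^2)=q_k(\theta_0)$. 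Finally, since $G$ is regular and connected, $\mathbf{1}$ spans the $\theta_0$-eigenspace of $A$, hence is a $q_k(\theta_0)$-eigenvector of $q_k(A)$, so every row of $q_k(A)$ sums to $q_k(\theta_0)$.

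First I would fix a vertex $u$ and apply the Cauchy--Schwarz inequality to the $s_k(u)$ numbers $(q_k(A))_{uv}$ with $\mathrm{dist}(u,v)\le k$: their sum is the row sum $q_k(\theta_0)$, so $q_k(\theta_0)^2\le s_k(u)\sum_{v}(q_k(A))_{uv}^2$, whence $\sum_{v}(q_k(A))_{uv}^2\ge q_k(\theta_0)^2/s_k(u)$. Summing over all $u$ and using $\sum_{u,v}(q_k(A))_{uv}^2=\tr(q_k(A)^2)=n\,q_k(\theta_0)$ yields $n\,q_k(\theta_0)\ge q_k(\theta_0)^2\sum_{u}1/s_k(u)$. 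Dividing by the positive quantity $q_k(\theta_0)=\|q_k\|_G^2$ gives $q_k(\theta_0)\le n/\sum_{u}(1/s_k(u))=H_k$, as claimed.

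For the equality characterization, equality in each Cauchy--Schwarz step forces $(q_k(A))_{uv}$ to be constant in $v$ over the ball $\{v:\mathrm{dist}(u,v)\le k\}$, say equal to $c_u$; symmetry of $q_k(A)$ then gives $c_u=c_v$ whenever $\mathrm{dist}(u,v)\le k$, and connectivity of $G$ propagates this to a single constant $c$, so that $q_k(A)=c\,(I+A(G^k))$. The row-sum relation $q_k(\theta_0)=c\,s_k(u)$ shows $s_k(u)$ is constant, and substituting into $\tr(q_k(A)^2)=c^2\sum_u s_k(u)=n\,q_k(\theta_0)$ pins down $c=1$, giving $q_k(A)=I+A(G^k)$. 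The converse is immediate: if $q_k(A)=I+A(G^k)$, then each $s_k(u)$ equals the common row sum $q_k(\theta_0)$, so $H_k=q_k(\theta_0)$ and the bound is attained.

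The only genuinely delicate point is the equality analysis: one must verify that the pointwise Cauchy--Schwarz equalities, combined with the symmetry of $q_k(A)$ and the connectivity of $G$, really do collapse $q_k(A)$ to a scalar multiple of $I+A(G^k)$, and then extract the normalization $c=1$ from the norm identity. The inequality itself is a short Cauchy--Schwarz argument once the support property, the norm identity $\|q_k\|_G^2=q_k(\theta_0)$, and the constant row sum are in hand.
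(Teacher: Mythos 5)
Your proof is correct. The paper itself does not prove this proposition---it is quoted from \cite{f02}---but your argument is essentially the standard one from that source: the support property of $q_k(A)$ up to distance $k$, the norm identity $\|q_k\|_G^2=q_k(\theta_0)$ (from orthogonality and the normalization $\|p_i\|_G^2=p_i(\theta_0)$), and the constant row sum $q_k(A)\mathbf{1}=q_k(\theta_0)\mathbf{1}$ (from regularity and connectivity), combined via Cauchy--Schwarz row by row, with equality collapsing $q_k(A)$ to $I+A(G^k)$. The only cosmetic difference is that you argue entrywise rather than with the vectors $q_k(A)e_u$ and the characteristic vectors of the balls of radius $k$, and your equality analysis (symmetry plus connectivity to get a single constant $c$, then the trace identity to force $c=1$) is carried out correctly.
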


Since it is known that $q_k(\theta_0)\ge q_k(\theta_i)$ for $i=1,\ldots,d$, Proposition \ref{propo:f02} and the bounds \eqref{bound:cvetkovic}--\eqref{bound:hoffman} yield the following bounds on $\alpha_k$ and $\chi_k$:

\begin{corollary}
\label{coro-qk}
 Let $G$ be a regular graph with eigenvalues $\lambda_1\ge \cdots \ge \lambda_n$, satisfying $q_k(\lambda_1)= H_k$. Let $q'_k=q_k-1$, so that $A(G^k)=q'_k(A)$. Then,

  \begin{equation}
 \label{bound:cvetkovic-chik}
\chi_k\ge \frac{n}{\min \{|\{i : q'_k(\lambda_i)\ge 0\}| , |\{i : q'_k(\lambda_i)\le 0\}|\}},
\end{equation}
\begin{equation}
\label{bound:hoffman-chik}
\chi_k\ge \frac{n}{1-\frac{q'_k(\lambda_1)}{\min\{q'_k(\lambda_i)\}}},
\end{equation}

 and the corresponding upper bounds

\begin{equation}
\label{bound:cvetkovic-Gk}
\alpha_k  \le \min \{|\{i : q'_k(\lambda_i)\ge 0\}| , |\{i : q'_k(\lambda_i)\le 0\}|\},
\end{equation}
\begin{equation}
\label{bound:hoffman-Gk}
\alpha_k \leq 1-\frac{q'_k(\lambda_1)}{\min\{q'_k(\lambda_i)\}}.
 \end{equation}

\end{corollary}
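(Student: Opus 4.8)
The plan is to reduce everything to the classical spectral bounds \eqref{bound:cvetkovic} and \eqref{bound:hoffman} applied to the graph $G^k$ itself, once its spectrum has been made explicit. The starting point is the hypothesis $q_k(\lambda_1)=H_k$, which is precisely the equality case of Proposition \ref{propo:f02}; hence $q_k(A)=I+A(G^k)$, equivalently $A(G^k)=q'_k(A)$ with $q'_k=q_k-1$. Since $A(G^k)$ is a polynomial in $A$, every eigenvector of $A$ for $\lambda_i$ is an eigenvector of $A(G^k)$ for $q'_k(\lambda_i)$, so the spectrum of $G^k$, counted with multiplicities, is exactly $\{q'_k(\lambda_1),\ldots,q'_k(\lambda_n)\}$.

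Before invoking the two classical bounds I would record two facts about this spectrum. First, because $G$ is regular the all-ones vector is an eigenvector of $A$ for $\lambda_1=\theta_0$, hence an eigenvector of $A(G^k)=q'_k(A)$ for $q'_k(\theta_0)=q'_k(\lambda_1)$; this shows $G^k$ is regular of degree $q'_k(\lambda_1)$, and (as $G$ connected forces $G^k$ connected) this value is its largest eigenvalue. Second, using the stated fact $q_k(\theta_0)\ge q_k(\theta_i)$ for all $i$, the quantity $q'_k(\lambda_1)$ is confirmed as the top eigenvalue while $\min_i q'_k(\lambda_i)$ takes the role of the smallest eigenvalue. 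These are exactly the ingredients required to feed $G^k$ into Hoffman's ratio bound.

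With the spectrum in hand the upper bounds on $\alpha_k=\alpha(G^k)$ are immediate. Applying Cvetkovi\'c's inertia bound \eqref{bound:cvetkovic} to $G^k$ and rewriting the counts of nonnegative and nonpositive eigenvalues of $A(G^k)$ as counts of indices $i$ with $q'_k(\lambda_i)\ge 0$ and $q'_k(\lambda_i)\le 0$ yields \eqref{bound:cvetkovic-Gk}. Applying Hoffman's ratio bound \eqref{bound:hoffman} to the regular graph $G^k$, with $\lambda_1$ replaced by $q'_k(\lambda_1)$ and $\lambda_n$ by $\min_i q'_k(\lambda_i)$, yields \eqref{bound:hoffman-Gk}.

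Finally, the two lower bounds on $\chi_k$ follow from the elementary inequality $\alpha(H)\ge |V(H)|/\chi(H)$, valid for every graph $H$ since the color classes of an optimal proper coloring partition the vertices into $\chi(H)$ independent sets. Taking $H=G^k$ gives $\chi_k\ge n/\alpha_k$, and substituting the upper bounds \eqref{bound:cvetkovic-Gk} and \eqref{bound:hoffman-Gk} produces \eqref{bound:cvetkovic-chik} and \eqref{bound:hoffman-chik} respectively. The only genuinely non-formal step is the passage from the equality hypothesis to $A(G^k)=q'_k(A)$ together with the verification that $q'_k(\lambda_1)$ is simultaneously the degree and the top eigenvalue of $G^k$; everything else is direct substitution into known bounds, so I expect no real obstacle beyond keeping the two eigenvalue orderings straight.
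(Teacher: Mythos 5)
Your overall route---use the equality case of Proposition \ref{propo:f02} to conclude $A(G^k)=q'_k(A)$, read off the spectrum of $G^k$ as $\{q'_k(\lambda_1),\ldots,q'_k(\lambda_n)\}$, note that $q'_k(\lambda_1)$ is both the degree and the largest eigenvalue of the regular graph $G^k$, apply \eqref{bound:cvetkovic} and \eqref{bound:hoffman} to $G^k$, and finish with $\chi(H)\ge |V(H)|/\alpha(H)$---is exactly the argument the paper intends, and your treatment of the inertia half, i.e.\ of \eqref{bound:cvetkovic-Gk} and \eqref{bound:cvetkovic-chik}, is correct.

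The Hoffman half, however, contains a genuine gap. Applying \eqref{bound:hoffman} to the regular graph $G^k$ gives
$\alpha_k \le \frac{n}{1-\frac{q'_k(\lambda_1)}{\min_i\{q'_k(\lambda_i)\}}}$,
which is \emph{not} \eqref{bound:hoffman-Gk}: in \eqref{bound:hoffman-Gk} the factor $n$ is missing from the numerator, so your claim that the substitution ``yields \eqref{bound:hoffman-Gk}'' is false, and the subsequent derivation of \eqref{bound:hoffman-chik} from it collapses as well. This is not a cosmetic slip, because \eqref{bound:hoffman-Gk} and \eqref{bound:hoffman-chik} as printed are actually false under the hypotheses of the corollary, so no proof of them can exist. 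Take $G$ the Petersen graph and $k=1$: then $q_1=1+x$, $q_1(\lambda_1)=4=H_1$, and $q'_1=x$, so \eqref{bound:hoffman-Gk} would assert $\alpha_1\le 1-\frac{3}{-2}=\frac{5}{2}$ while $\alpha(G)=4$, and \eqref{bound:hoffman-chik} would assert $\chi\ge \frac{10}{5/2}=4$ while $\chi(G)=3$. The two displays in the corollary have evidently been transposed (a typo in the paper): what your argument, carried out correctly, actually proves is $\alpha_k\le \frac{n}{1-\frac{q'_k(\lambda_1)}{\min_i\{q'_k(\lambda_i)\}}}$ and, via $\chi_k\ge n/\alpha_k$, the bound $\chi_k\ge 1-\frac{q'_k(\lambda_1)}{\min_i\{q'_k(\lambda_i)\}}$. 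Had you written out the substitution into \eqref{bound:hoffman} explicitly instead of asserting it matches the target formula, you would have caught the mismatch; the right move here is to flag the misstatement and prove the corrected inequalities, not to force your derivation onto the printed ones.
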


Corollary \ref{coro-qk} provides the first two spectral bounds to Question \ref{que:AlonMohar} for regular graphs. This is due to the fact that another case where $A(G^k)=q_k(A)-I$ (that is, the spectrum of $G^k$ and $G$ are related) is when $G$ is $\delta$-regular graph with girth $g$ and $k=\lfloor \frac{g-1}{2}\rfloor$. In this situation, we know that $G$ is $k$-partially distance-regular with $a_i=0$ for $i\leq k$
\cite{adf2016} and hence $q_0=1$, $q_1=1+x$, and $q_{i+1}=xq_i-(\delta-1)q_{i-1}$ for $i=1,\ldots,k-1$.

Regarding Question \ref{que:AlonMohar}, Kang and Pirot \cite{kp2016} provide several upper and lower bounds for $k\geq 3$,
all of which are sharp up to a constant factor as $d \rightarrow \infty$. While their upper bounds
rely in part on the probabilistic method, their lower bounds are various
direct constructions whose building blocks are incidence structures. Actually, some tight examples for our bound (\ref{bound:hoffman-chik}) can be constructed from the latter. In particular, from even cycles using the balanced bipartite product `$\bowtie$' introduced in \cite{kp2016,kp2018}. Let $G_1 = (V_1=A_1\cup B_1,E_1)$ and $G_2 = (V_2=A_2\cup B_2,E_2)$ be bipartite graphs with $|A_1| = |B_1|$ and $|A_2| = |B_2|$, also known as balanced bipartite graphs. Assume vertex sets $A_i = \{a_1^i,\dots a_{n_i}^i\}$ and $B_i = \{b_1^i,\dots b_{n_i}^i\}$ be ordered such that $(a_j^i,b_j^i) \in E_i$ for $j = 1,2,\dots,n_i$. Then the product $G_1\bowtie G_2$ is defined as $\left(V_{G_1\bowtie G_2}, E_{G_1\bowtie G_2}\right)$ with
\begin{equation*}
\begin{array}{rll}
V_{G_1\bowtie G_2} &:= &A_1 \times A_2 \cup B_1 \times B_2 \\
E_{G_1\bowtie G_2} &:= &\lbrace \left((a_i^1,a^2), (b_i^1,b^2)\right) \mid i\in \{1,\dots,n_1\}, (a^2,b^2)\in E_2 \rbrace \cup \\
&&\lbrace \left((a^1,a_i^2), (b^1,b_i^2)\right) \mid i\in \{1,\dots,n_2\}, (a^1,b^1)\in E_1 \rbrace,
\end{array}
\end{equation*}
which is again a balanced bipartite graph. Moreover, if $G_1$ and $G_2$ are regular with degree $d_1$ and $d_2$, then $G_1\bowtie G_2$ is regular with degree $d_1+d_2-1$. The graphs $C_8\bowtie C_8$, $C_8\bowtie C_{12}$, $C_8\bowtie C_{16}$ and $C_{12}\bowtie C_{12}$, where $C_n$ denotes the cycle on $n$ vertices, each have girth 6 and satisfy Equation \eqref{bound:hoffman-Gk} with equality for $\alpha_2$. The bound \eqref{bound:hoffman-Gk} is also tight for several named Sage graphs, which are shown in Table \ref{tableremark}.

\begin{table}[H]
	\centering
\begin{tabular}{ |l|c c c| }
\hline
Name & Girth & $k$ & $\alpha_k$\\
\hline
Moebius-Kantor Graph & 6 & 2 & 4\\
Nauru Graph & 6 & 2 & 6\\
Blanusa First Snark Graph & 5 & 2 & 4\\
Blanusa Second Snark Graph & 5 & 2 & 4\\
Brinkmann graph & 5 & 2 & 3\\
Heawood graph & 6 & 2 & 2\\
Sylvester Graph & 5 & 2 & 6\\
Coxeter Graph & 7 & 3 & 4\\
Dyck graph & 6 & 2 & 8\\
F26A Graph & 6 & 2 & 6\\
Flower Snark & 5 & 2 & 5\\
\hline
\end{tabular}
 \caption{Named Sage graphs for which bound \eqref{bound:hoffman-Gk} from Corollary \ref{coro-qk} is tight.} \label{tableremark}
\end{table}

\section{The general case: the spectrum of $G^k$ and $G$ are not related}\label{seubsec:}
In the general situation when the spectrum of $G^k$ and $G$ are not related, one can make use of the following recent spectral bounds for $\alpha_k$ given in \cite{acf2019}. Let $G$ be a graph with eigenvalues $\lambda_1\ge \cdots\ge \lambda_n$. Let $[2,n]=\{2,3,\ldots,n\}$. Given a polynomial $p\in \Real_k[x]$, consider the following parameters:
\begin{itemize}
\item
$W(p) = \max_{u\in V}\{(p(A))_{uu}\}$,
\item
$w(p) = \min_{u\in V}\{(p(A))_{uu}\}$,
\item
$\Lambda(p) = \max_{i\in[2,n]}\{p(\lambda_i)\}$,
\item
$\lambda(p) = \min_{i\in[2,n]}\{p(\lambda_i)\}$.
\end{itemize}

\begin{theorem}{\rm (Abiad, Coutinho, Fiol  \cite{acf2019})}
\label{thmACF2019}.
Let $G$ be a graph with $n$ vertices and eigenvalues
$\lambda_1\ge\cdots \ge \lambda_n$.
\begin{itemize}
\item[$(i)$]
{\bf An inertial-type bound.}  Let  $p\in \Real_k[x]$ with corresponding parameters $W(p)$ and $\lambda(p)$. Then,
\begin{equation}
\label{eq:thm1}
\alpha_k\le \min\{|i : p(\lambda_i) \ge w(p)| , |i : p(\lambda_i) \le W(p)|\}.
\end{equation}
\item[$(ii)$] {\bf A ratio-type bound.}
Assume that $G$ is regular. Let  $p\in \Real_k[x]$ such that $p(\lambda_1) > \lambda(p)$. Then,
\begin{equation}
\label{eq:thm2}
\alpha_k\le n\frac{W(p)-\lambda(p)}{p(\lambda_1)-\lambda(p)}.
\end{equation}
\end{itemize}
\end{theorem}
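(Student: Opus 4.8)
The plan is to exploit the fact that a polynomial of degree at most $k$, evaluated at $A$, cannot ``see'' pairs of vertices lying more than $k$ apart. First I would record the elementary observation that $(A^j)_{uv}$ counts walks of length $j$ from $u$ to $v$, so $(A^j)_{uv}=0$ whenever $j<\mathrm{dist}(u,v)$. Consequently, for $p\in\Real_k[x]$ and any two distinct vertices $u,v$ with $\mathrm{dist}(u,v)>k$ we get $(p(A))_{uv}=\sum_{j=0}^{k}a_j(A^j)_{uv}=0$. If $S$ is a $k$-independent set, every pair of distinct vertices in $S$ is at distance greater than $k$, so the principal submatrix $B_S$ of $B:=p(A)$ indexed by $S$ is \emph{diagonal}, with diagonal entries $(p(A))_{uu}\in[w(p),W(p)]$. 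This single structural fact drives both bounds.

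For part $(i)$ I would apply Cauchy interlacing to $B=p(A)$, whose multiset of eigenvalues is exactly $\{p(\lambda_i)\}_{i=1}^{n}$, and to its principal submatrix $B_S$. Writing $s=|S|=\alpha_k$, letting $\eta_1\ge\cdots\ge\eta_s$ be the eigenvalues of $B_S$ (which are just its diagonal entries) and $\mu_1\ge\cdots\ge\mu_n$ the sorted eigenvalues of $B$, interlacing yields $\mu_s\ge\eta_s\ge w(p)$ and $\mu_{n-s+1}\le\eta_1\le W(p)$. The first inequality shows that at least $s$ of the values $p(\lambda_i)$ are $\ge w(p)$, and the second that at least $s$ of them are $\le W(p)$; taking the minimum of the two counts gives the claimed inertial-type bound.

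For part $(ii)$ I would run the Hoffman ratio argument with the indicator vector $\chi_S$ of $S$. Since $G$ is regular, $\mathbf{1}/\sqrt{n}$ is a unit eigenvector for $\lambda_1$, so decomposing $\chi_S=\sum_i c_i v_i$ in an orthonormal eigenbasis with $v_1=\mathbf{1}/\sqrt{n}$ gives $c_1=s/\sqrt{n}$, hence $c_1^2=s^2/n$ and $\sum_{i\ge 2}c_i^2=\|\chi_S\|^2-c_1^2=s-s^2/n$. Evaluating the quadratic form two ways, the spectral side is
\begin{equation*}
\chi_S^\top B\,\chi_S=\tfrac{s^2}{n}\,p(\lambda_1)+\sum_{i\ge 2}c_i^2\,p(\lambda_i)\ \ge\ \tfrac{s^2}{n}\,p(\lambda_1)+\lambda(p)\Bigl(s-\tfrac{s^2}{n}\Bigr),
\end{equation*}
while the combinatorial side, using that $B_S$ is diagonal, is $\chi_S^\top B\,\chi_S=\sum_{u\in S}(p(A))_{uu}\le s\,W(p)$. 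Chaining the two, dividing by $s>0$, and solving for $s/n$ (where the hypothesis $p(\lambda_1)>\lambda(p)$ guarantees a positive denominator) produces exactly $\alpha_k\le n\,(W(p)-\lambda(p))/(p(\lambda_1)-\lambda(p))$.

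The genuinely new content is the opening structural observation; once the diagonal-block fact is in place, both parts are the classical Cvetkovi\'c and Hoffman arguments transplanted from $A$ to $p(A)$. The only point needing care is that $p$ need not be monotone, so part $(i)$ must be phrased in terms of the \emph{multiset} $\{p(\lambda_i)\}$ rather than any ordering inherited from the $\lambda_i$; this is precisely why the interlacing step is stated for the sorted $\mu_j$ and then translated back into counts of indices $i$. I expect no serious obstacle beyond this bookkeeping.
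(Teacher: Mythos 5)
Your proof is correct, and both halves are sound: the opening observation that $(p(A))_{uv}=0$ for $\mathrm{dist}(u,v)>k$ does indeed make the principal submatrix indexed by a $k$-independent set diagonal, the Cauchy interlacing step in part $(i)$ is applied with the correct index bookkeeping, and the Rayleigh computation in part $(ii)$ closes properly using $p(\lambda_1)>\lambda(p)$. For the comparison, note that this paper does not actually reprove Theorem \ref{thmACF2019}: it quotes it from \cite{acf2019}, and the only argument it supplies is a direct proof of the special case \eqref{Hoffman-wr} (partially walk-regular, $\tr p(A)=0$) via the test vector $(x\1\,|\,\1)^\top$ optimized over $x$. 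Your part $(i)$ coincides with the argument in the cited source \cite{acf2019}, which also uses the diagonal principal submatrix plus interlacing. Your part $(ii)$ takes a genuinely different, more elementary route than \cite{acf2019}: there the bound is obtained by Haemers-type interlacing applied to the $2\times 2$ quotient matrix of $p(A)$ for the partition $\{U,V\setminus U\}$, which requires the extra observation that the second quotient eigenvalue is at least $\lambda(p)$ (its eigenvector can be chosen orthogonal to the Perron vector $\1$); you instead expand the indicator vector $\chi_S$ in an eigenbasis, where orthogonality to $\1$ is explicit in the coefficients, so that subtlety never arises. What the quotient-matrix route buys is a framework that extends to finer partitions and to the chromatic-number arguments used later in the paper (cf.\ the proof of the bound \eqref{bound:secondchi}); what your route buys is brevity and transparency, and it is essentially the general-$W(p)$ version of the paper's own direct proof of \eqref{Hoffman-wr}, recovering that inequality when $\tr p(A)=0$ and $W(p)=w(p)$.
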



In Section \ref{sec:newboundschik} we shall prove new eigenvalue lower bounds for $\chi_k$ which only require the use of the spectrum of $G$, this also useful when the spectrum of $G$ and $G^k$ are not related.

\subsection{Partially walk-regular graphs}\label{sec:partiallywalkregular}
A graph $G$ is called {\em $k$-partially walk-regular}, for some integer $k\ge 0$, if the number of closed walks of a given length $l\le k$, rooted at a vertex $v$, only depends on $l$. Thus, every (simple) graph is $k$-partially walk-regular for $k=0,1$, every regular graph is $2$-partially walk-regular and, more generally, every $k$-partially distance-regular is $2k$-partially walk-regular. Moreover $G$ is $k$-partially walk-regular for any $k$ if and only if $G$ is walk-regular, a concept introduced by Godsil and Mckay in \cite{gm80}.  For example, it is well-known that every distance-regular graph is walk-regular (but the converse does not hold).
In other words, if $G$ is $k$-partially walk-regular, for any polynomial $p\in \Real_k[x]$ the diagonal of $p(A)$ is constant with entries
$$
(p(A))_{uu}=w(p)=W(p)=\frac{1}{n}\tr p(A)=\frac{1}{n}\sum_{i=1}^n  p(\lambda_i)\quad \forall u\in V.
$$
Then, with $p\in \Real_k[x]$, \eqref{eq:thm1} and \eqref{eq:thm2} become
\begin{equation}
\label{eq:thm1-pwr}
\textstyle
\alpha_k\le \min\{|i : p(\lambda_i) \ge \frac{1}{n}\sum_{i=1}^n p(\lambda_i)| , |i : p(\lambda_i) \le \frac{1}{n}\sum_{i=1}^n  p_k(\lambda_i)|\}
\end{equation}
and
\begin{equation}
\label{general-bound-alpha_k}
\alpha_k\le \frac{\sum_{i=1}^n p(\lambda_i)-n\cdot\lambda(p)}{p(\lambda_1)-\lambda(p)}.
\end{equation}

In particular, notice that if $\tr p(A)=\sum_{i=1}^{n} p(\lambda_i) = 0$, inequality \eqref{general-bound-alpha_k} becomes
\\
\begin{equation}
\label{Hoffman-wr}
\alpha_k\leq \frac{n}{1-\frac{p(\lambda_1)}{\lambda(p)}}.
\end{equation}
\\
This can be seen a generalization of Hoffman bound \eqref{bound:hoffman}, since
it is obtained when, in \eqref{Hoffman-wr}, we take $k=1$ and $p_k(x)=x$ (in this case, note that  $p(\lambda_1)=\lambda_1$ and $\lambda(p)=p(\lambda_n)=\lambda_n$).
\\
In fact, for this case of partially $k$-walk-regular graphs, Fiol \cite{fiol20} proved that the upper bound in \eqref{general-bound-alpha_k} also applies for the Shannon capacity  $\Theta$ \cite{s56} and the Lov\'asz theta number $\vartheta$ \cite{l79} of $G^k$.

An alternative, and more direct proof of \eqref{Hoffman-wr} is the following. Let $G$ have adjacency matrix $A$, and let $U=\{1,2,\ldots,\alpha_k\}$ be a maximal $k$-independent set in $G$, such that the first vertices of $A$ correspond to $U$. Put $\vu=(x \1\, |\, \1)^\top$, where $x$ is a variable such that the values of $x$ correspond to the vertices in the maximal $k$-independent set $U$. Now consider the function
$$
\phi(x)=\frac{\langle \vu,p(A)\vu \rangle}{||\vu||^2}=\frac{2\alpha_k p(\lambda_1)x+(n-2\alpha_k)p(\lambda_1)}{\alpha_k x^2+n-\alpha_k},
$$
which attains a minimum at $x_{\min}=1-\frac{n}{\alpha_k}$. Thus, $\phi(x_{\min})$ gives
$$
\lambda(p)\leq \phi(x_{\min})=\frac{p(\lambda_1)}{1-\frac{n}{\alpha_k}},
$$
whence \eqref{Hoffman-wr} follows.
The same proof idea was used to extend the ratio bound for oriented hypergraphs, but using the normalized Laplacian spectrum \cite{amz2020}.

\subsubsection{Optimizing the upper bounds for $\alpha_k$}\label{sec:optimizingboundsalphak}
Notice that the bounds \eqref{eq:thm1-pwr} and \eqref{general-bound-alpha_k} are invariant under scaling and/or translating the polynomial $p$. Thus, when we are looking for the best polynomials, we can restrict ourselves to the following cases:
\begin{description}
\item[Bound \eqref{eq:thm1-pwr}:] upon changing the sign of an optimal solution $p$, we can always assume we are trying to find $p$ that minimizes $|\{i : p(\lambda_i) \geq w(p)\}|$. Moreover, a constant can be added to $p$ to make $w(p) = 0$. Thus, we get
\begin{equation}
\label{eq:thm1-pwr-simp}
\textstyle
\alpha_k\le \min\{|i : p(\lambda_i) \ge 0|\}.
\end{equation}
The optimization of this bound will be investigated in Section \ref{sec:directconsequenceourinertiabound}.
  \item[Bound \eqref{general-bound-alpha_k}:] we consider two simple possibilities:
  \begin{itemize}
\item[$(a)$]
If $p=f\in \Real[k]$ is a polynomial satisfying $\lambda(f)=0$ and $f(\theta_0)=1$,  the best result is obtained with the so-called {\em minor polynomial} $f_k$ that minimizes  $\sum_{i=0}^d m_i f_k(\theta_i)$. This case was studied by Fiol in \cite{fiol20}.
This polynomial can be found by solving the following linear programming problem (LP):
Let $f_k$ be defined by $f_k(\theta_0)=x_0=1$ and $f_k(\theta_i)=x_i$, for $i=1,\ldots,d$,  where the vector $(x_1,x_2,\ldots,x_d)$ is a solution of
\begin{equation}
\boxed{
\begin{array}{rl}
 {\tt minimize} & \sum_{i=0}^d m_ix_i\\
 {\tt subject\ to} & f[\theta_0,\ldots,\theta_m]=0,\ m=k+1,\ldots,d\\
                       & x_i\ge 0,\ i=1,\ldots,d\\
\end{array}
}
\label{LPminorpoly}
\end{equation}
Here, $f[\theta_0,\ldots,\theta_m]$ denote the $m$-th divided differences of Newton interpolation, recursively defined by  $f[\theta_i,\ldots,\theta_j]=\frac{f[\theta_{i+1},\ldots,\theta_j]-f[\theta_i,\ldots,\theta_{j-1}]}
{\theta_j-\theta_{i}}$, where $j>i$, starting with $f[\theta_i]=p(\theta_i)=x_i$, $0\le i\le d$.
Note that by equating these values to zero, we guarantee that $f_k\in \Real_k[x]$.
For more details about the minor polynomials, see  \cite{fiol20}.
Then, we get
\begin{equation}
\label{chi_k-minor-pol}
\alpha_k\le \sum_{i=0}^d m_i p_k(\theta_i)=\tr p_k(A),\qquad \mbox{and}\qquad   \chi_k\ge \frac{n}{ \sum_{i=0}^d m_i p_k(\theta_i)}.
\end{equation}
\item[$(b)$]
If $p=g\in \Real_k[x]$ is the polynomial satisfying $\sum_{i=0}^d m_i g(\theta_i)=0$ and $\lambda(g)=-1$, Eq. \eqref{Hoffman-wr}, with $\lambda_1=\theta_0$, gives
\begin{equation}
\label{chi_k-tr=0}
\alpha_k\le \frac{n}{1+g(\theta_0)},\qquad \mbox{and}\qquad \chi_k\ge 1+g(\theta_0).
\end{equation}
Hence, the best result is now obtained by maximizing $g(\theta_0)$. If $g(\theta_i)=x_i$ for $i=0,\ldots,d$, this leads to the following LPP:
\begin{equation}
\boxed{
\begin{array}{rl}
 {\tt maximize} &  x_0\\
 {\tt subject\ to} & \sum_{i=0}^d m_i x_i=0 \\
  & f[\theta_0,\ldots,\theta_m]=0,\ m=k+1,\ldots,d\\
                       & x_i=z_i-1,
                       z_i\ge 0,\ i=1,\ldots,d\\
\end{array}
}
\label{LLP}
\end{equation}
\end{itemize}

Consequently, both results $(a)$ and $(b)$ are equivalent in the sense that the best polynomial in $(a)$ yields the same results as the best polynomial in $(b)$. In the first case, $f_k$ is the polynomial that minimizes $\sum_{i=0}^d m_i f_k(\theta_i)$, subject to $f_k(\theta_i) \ge 0$ for any $i=1,\ldots,d$, and $f_k(\theta_0)=1$. In the second case, $g$ is the polynomial that maximizes $g(\lambda_0)$ under the conditions $g(\theta_i) \ge -1$ for any $i=1,\ldots,d$ and $\sum_{i=0}^d m_i g(\theta_i)=0$.
Now, suppose that $g$ satisfies the conditions in $(b)$. Then, then the polynomial $f_k=\frac{g+1}{g(\theta_0)+1}$ satisfies the conditions in $(a)$ and we get
$$
\textstyle
\alpha_k\le \sum_{i=0}^d m_i f_k(\theta_i)=\frac{1}{g(\theta_0)+1}\left[\sum_{i=0}^d m_i g(\theta_i)+n\right]=\frac{n}{1+g(\theta_0)},
$$
as expected. Similarly, if $f_k$ satisfies the conditions in $(a)$, then the polynomial
$
g=\frac{n f_k-\sum_{i=0}^d m_i f_k(\theta_i)}{\sum_{i=0}^d m_i f_k(\theta_i)}
$
satisfies the conditions in $(b)$, and yields the expected bound
$$
\textstyle
\alpha_k\le \frac{n}{1+g(\theta_0)}=\frac{1}{n}\sum_{i=0}^d m_i f_k(\theta_i).
$$
\end{description}

\section{New spectral bounds for $\chi_k$}\label{sec:newboundschik}

 In this section we prove several eigenvalue lower bounds for $\chi_k$ which only require the spectrum of $G$.

\subsection{First inertial-type bound for $\chi_k$}\label{sec:directconsequenceourinertiabound}


The first inertial-type bound is a consequence of the bound for $\alpha_k$ in \eqref{eq:thm1} (for a general value of $k$, an infinite class of graphs which attain such a bound is shown in \cite{act2016}):
\begin{equation}
\label{naiveirectlowerbound}
\chi_k(G) \ge \frac{n}{\min\{|i : p_k(\lambda_i) \ge w(p_k)| , |i : p_k(\lambda_i) \le W(p_k)|\}}.
\end{equation}

In the case of $k$-partially walk-regular graphs, the optimization of such bounds has already been discussed in Section \ref{sec:partiallywalkregular}.


We should note that if one considers $p_2(A)=A^2$ the bound (\ref{naiveirectlowerbound}) becomes:
\begin{equation}
\label{inertialikeboundk2distancechromaticnumber}
\chi_2(G) \ge \frac{n}{\min\{|i : \lambda_i^2 \ge \delta| , |i : \lambda_i^2 \le \Delta|\}},
\end{equation}
and this bound is tight for an infinite family of graphs. Indeed, consider
the incidence graph $G$ of a projective plane $PG(2, q)$, then $G^2$ has two cliques of size $q^2 + q + 1$ (corresponding to the points and lines, since any two points are incident to a common line and any two lines are incident to a common point). Therefore, $\chi(G^2) \geq q^2 + q + 1$. This is an example that Alon and Mohar use in \cite{am2002}. Note that (\ref{inertialikeboundk2distancechromaticnumber}) gives the same bound, as the eigenvalues of $G$ are $q+1, \sqrt{q}, 0, - \sqrt{q}$ and $-q-1$. In particular, $w_2(G) = W_2(G) = q + 1$ (the degree of the graph), whereas there are only two eigenvalues $q+1$ and $-q-1$ whose square is $\geq + 1$. So, as per the inertial-type bound from \cite{act2016}, $\alpha(G^2) \leq 2$, and hence $\chi(G^2) \geq \frac{2(q^2+q+1)}{2}$.

\subsubsection{Optimization of the first inertial-type bound}
\label{subsec:opt-first}

Our goal is to introduce a mixed integer linear  program (MILP) to compute the best polynomial giving the above bound in \eqref{eq:thm1} (and hence the same for the bound in \eqref{naiveirectlowerbound}). Since such a bound is also valid for the quantum $k$-independence number and this parameter is not computable in general, the use MILPs to find the best polynomial is justified.

Let $G$ have spectrum $\spec G=\{\theta_0^{m_0},\ldots, \theta_d^{m_d}\}$. Upon changing the sign of an optimal solution $p$, we can always assume we are trying to find $p\in \Real_k[x]$, and minimizing $|\{i : p(\lambda_i) \geq w(p)\}|$ or, in term of multiplicities, $\min \sum_{j: p(\theta_j)\geq w(p)} m_j$. Moreover, assuming that $w(p) = p(A)_{uu}$ for some vertex $u \in V(G)$, a constant can be added to $p(x)$ making $w(p) = 0$.

Let
$
p(x) = a_k x^k +\cdots + a_0,
$
$\b=(b_0,\ldots,b_d) \in \{0,1\}^{d+1}$, and $\m=(m_0,\ldots,m_d)$.
The following mixed integer linear program (MILP), with variables $a_0,\ldots,a_k$ and $b_1,\ldots, b_d$, finds the best polynomial for the bound \eqref{eq:thm1}:

\begin{center}
\begin{equation}
\boxed{
\begin{array}{rl}
{\tt minimize} & \m^\Tr \b\\
{\tt subject\ to} & \sum_{i = 0}^k a_i (A^i)_{vv} \geq 0,\quad  v \in V(G)\setminus \{u\}\\
 & \sum_{i = 0}^k a_i (A^i)_{uu} = 0\\
 & \sum_{i = 0}^k a_i \theta_j^{\ i} - M b_j + \epsilon \leq 0, \quad  j = 0,...,d\quad (\ast)\\
 & \b \in \{0,1\}^{d+1}
		\end{array}}
\label{MILP:inertia}
\end{equation}
\end{center}
Here $M$ is set to be a large number, and $\epsilon$ small.
The idea of this formulation is that each $b_j = 1$ represents an index $j$ so that $p(\theta_j) \geq w(p) = 0$. In fact, condition $(\ast)$ gives that $p(\theta_j)\ge 0$ implies $b_j=1$.
 So, upon minimizing the quantity of such indices $j$, we are optimizing $p(x)$ and the corresponding bound  $\alpha_k\le \m^\Tr \b$. For each $u \in V(G)$, we write one such MILP and find the best objective value of all.
With respect to the choices for $\epsilon$ and $M$, note that we can always set $\epsilon = 1$ as scaling of the $a_i$'s is allowed. If the $M$ chosen is not large enough, the MILP will be unfeasible and we can repeat with a larger $M$.


In Table \ref{tableMILP}, the results of the MILP optimal bound \eqref{MILP:inertia} are shown for all named graphs in Sage with less than 100 vertices and diameter at least 3. We compare these to the Lov\'asz theta number of $G^k$ and the exact value of $\alpha_2$. For regular graphs, the bound from Corollary 3.3 in \cite{acf2019} is also included. Observe that the bound in \cite{acf2019} generally outperforms our MILP for the graphs in Table \ref{tableMILP}. However, it should be noted that this bound requires regularity, whereas the MILP bound \eqref{MILP:inertia} is also applicable to irregular graphs. Table \ref{tableprop} shows for $n = 4,\dots 9$ the proportion of irregular graphs on $n$ vertices for which the optimal solution of our MILP matches the actual value of $\alpha_2$.

\begin{table}
	\scriptsize{
		\begin{center}
			\begin{tabular}{|lccccc|}
				\hline
				Name & Bound in \cite{acf2019} & $\vartheta_2$ \cite{l79} & Inertial-type bound MILP \eqref{MILP:inertia} & Inertial-type bound MILP \eqref{MILP:inertia2} & $\alpha_2$\\
				\hline
				Balaban 10-cage & $17$ & $17$ & $19$ & $19$ & $17$ \\
				Frucht graph & $3$ & $3$ & $3$ & $3$ & $3$ \\
				Meredith Graph & $14$ & $10$ & $10$ & $10$ & $10$ \\
				Moebius-Kantor Graph & $4$ & $4$ & $6$ & $4$ & $4$ \\
				Bidiakis cube & $3$ & $2$ & $4$ & $3$ & $2$ \\
				Gosset Graph & $2$ & $2$ & $8$ & $2$ & $2$ \\
				Gray graph & $14$ & $11$ & $19$ & $19$ & $11$ \\
				Nauru Graph & $6$ & $5$ & $8$ & $8$ & $6$ \\
				Blanusa First Snark Graph & $4$ & $4$ & $4$ & $4$ & $4$ \\
				Pappus Graph & $4$ & $3$ & $7$ & $6$ & $3$ \\
				Blanusa Second Snark Graph & $4$ & $4$ & $4$ & $4$ & $4$ \\
				Poussin Graph & - & $2$ & $4$ & - & $2$ \\
				Brinkmann graph & $4$ & $3$ & $6$ & $6$ & $3$ \\
				Harborth Graph & $12$ & $9$ & $13$ & $13$ & $10$ \\
				Perkel Graph & $10$ & $5$ & $18$ & $18$ & $5$ \\
				Harries Graph & $17$ & $17$ & $18$ & $18$ &$17$ \\
				Bucky Ball & $16$ & $12$ & $16$ & $16$ & $12$ \\
				Harries-Wong graph & $17$ & $17$ & $18$ & $18$ & $17$ \\
				Robertson Graph & $3$ & $3$ & $5$ & $5$ & $3$ \\
				Heawood graph & $3$ & $2$ & $2$ & $3$ & $2$ \\
				Herschel graph & - & $2$ & $3$ & - & $2$ \\
				Hoffman Graph & $3$ & $2$ & $5$ & $4$ & $2$ \\
				Sousselier Graph & - & $3$ & $5$ & - & $3$ \\
				Sylvester Graph & $6$ & $6$ & $10$ & $10$ & $6$ \\
				Coxeter Graph & $7$ & $7$ & $7$ & $7$ & $7$ \\
				Holt graph & $6$ & $3$ & $7$ & $8$ & $3$ \\
				Szekeres Snark Graph & $12$ & $10$ & $13$ & $14$ & $9$ \\
				Desargues Graph & $5$ & $5$ & $6$ & $6$ & $4$ \\
				Horton Graph & $26$ & $24$ & $30$ & $30$ & $24$ \\
				Kittell Graph & - & $3$ & $5$ & - & $3$ \\
				Tietze Graph & $3$ & $3$ & $4$ & $3$ & $3$ \\
				Double star snark & $7$ & $7$ & $9$ & $10$ & $6$ \\
				Krackhardt Kite Graph & - & $2$ & $4$ & - & $2$ \\
				Durer graph & $3$ & $2$ & $3$ & $3$ & $2$ \\
				Klein 3-regular Graph & $13$ & $13$ & $19$ & $19$ & $12$ \\
				Truncated Tetrahedron & $3$ & $3$ & $4$ & $4$ & $3$ \\
				Dyck graph & $8$ & $8$ & $8$ & $8$ & $8$ \\
				Klein 7-regular Graph & $3$ & $3$ & $9$ & $3$ & $3$ \\
				Ellingham-Horton 54-graph & $14$ & $12$ & $20$ & $20$ & $11$ \\
				Tutte-Coxeter graph & $8$ & $6$ & $10$ & $10$ & $6$ \\
				Ellingham-Horton 78-graph & $21$ & $19$ & $27$ & $27$ & $18$ \\
				Tutte Graph & $11$ & $10$ & $13$ & $13$ & $10$ \\
				Errera graph & - & $2$ & $4$ & - & $2$ \\
				F26A Graph & $6$ & $6$ & $7$ & $7$ & $6$ \\
				Watkins Snark Graph & $14$ & $9$ & $13$ & $13$ & $9$ \\
				Flower Snark & $5$ & $5$ & $7$ & $7$ & $5$ \\
				Markstroem Graph & $6$ & $6$ & $7$ & $7$ & $6$ \\
				Wells graph & $6$ & $3$ & $9$ & $10$ & $2$ \\
				Folkman Graph & $4$ & $3$ & $5$ & $5$ & $3$ \\
				Wiener-Araya Graph & - & $8$ & $12$ & - & $8$ \\
				Foster Graph & $22$ & $22$ & $23$ & $23$ & $21$ \\
				McGee graph & $6$ & $5$ & $7$ & $6$ & $5$ \\
				Franklin graph & $3$ & $2$ & $4$ & $3$ & $2$ \\
				Hexahedron & $2$ & $2$ & $2$ & $2$ & $2$ \\
				Dodecahedron & $5$ & $4$ & $4$ & $4$ & $4$\\
				Icosahedron & $2$ & $2$ & $4$ & $2$ & $2$ \\
				\hline
			\end{tabular}
	\end{center}}
	\caption{Comparison between different bounds for $\alpha_2$.}
	\label{tableMILP}
\end{table}
\normalsize

\begin{table}
		\begin{center}
			\begin{tabular}{|l|c|c|c|c|c|c|}
				\hline
				Number of vertices & 4 & 5 & 6 & 7 & 8 & 9\\
				\hline
				Proportion & 0.86 & 0.84 & 0.76 & 0.62 & 0.46 & 0.27\\
				\hline
			\end{tabular}
		\end{center}
		\caption{Proportion of small graphs for which the optimal value of the MILP coincides with $\alpha_2$.}
		\label{tableprop}
\end{table}

In the case of $k$-partially walk-regular graphs, we only need to run the MILP \eqref{MILP:inertia} once, since all vertices have the same number of closed walks of length smaller of equal than $k$. Then, the problem can be formulated follows:

Let $G$ be a $k$-partially walk-regular graph with diameter $D$ and $\spec G=\{\theta_0^{m_0},\theta_1^{m_1},\ldots,\theta_d^{m_d}\}$. For a given $k<D\ (\le d)$, let
$ p(x) = a_k x^k +\cdots +a_0$,
$\b=(b_0,\ldots,b_d)\in \{0,1\}^{d+1}$ and $\m=(m_0,\ldots,m_d)$.
Now, the following MILP \eqref{MILP:inertiaWR}, with variables $a_1,\ldots,a_k$ and $b_0,\ldots, b_d$, finds the best polynomial and the corresponding bound for $\alpha_k$:

\begin{center}
	\begin{equation}
        \boxed{
		\begin{array}{rl}
		 &	\\
		{\tt minimize} & \m^\Tr \b\\
        {\tt subject\ to} & \sum_{i = 0}^d m_i p(\theta_i)= 0\\
		& \sum_{i = 0}^k a_i \theta_j^{\ i} - Mb_j + \epsilon \leq 0, \quad j = 0,...,d \\
		& \b \in \{0,1\}^{d+1}
		\end{array}}
        \label{MILP:inertiaWR}
    \end{equation}
\end{center}

Observe that the target polynomial $p$ in \eqref{MILP:inertiaWR} could be written as a linear combination of the predistance polynomials $p_1,\ldots,p_k$, since all of them are orthogonal to $p_0=1$ with respect to the scalar product in \eqref{escalar-prod}: $\langle p_j, 1\rangle_G=\frac{1}{n}\tr p_j(A)=w(p_j)=0$, $j=1,\ldots,k$, and, hence, so is $p$. This allows us to remove the first constraint in \eqref{MILP:inertia}.

Next we illustrate how the MILP \eqref{MILP:inertiaWR} can be used to find the best polynomials to upper bound $\alpha_k$ for an infinite family of Odd graphs. For every integer $\ell\geq 2$, the Odd graphs $O_{\ell}$ constitute a well-known family of distance-regular graphs with interactions between graph theory and other areas of combinatorics, such as coding theory and design theory. The vertices of $O_{\ell}$ correspond to the $\ell -1$ subsets of a $(2\ell -1)$-set, and adjacency is defined by void intersection. Note that $O_3$ is the Petersen graph. In general, $O_{\ell}$  is an $\ell$-regular graph of order $n=\binom{2\ell -1}{\ell -1}=\frac{1}{2}\binom{2\ell}{\ell}$, diameter $D=\ell-1$, and its eigenvalues and multiplicities are $\theta_i=(-1)^i(\ell-i)$ and $m(\theta_i)=m_i=\binom{2\ell-1}{i}-\binom{2\ell-1}{i-1}$ for $i=0,1,\ldots, \ell-1$.

For the case $k=D-1=\ell-2$, where $\alpha_k$ is the maximum number of vertices mutually at distance $D$, we have the following result:
\begin{proposition}
For the Odd graph $O_{\ell}$, with diameter $D=\ell-1$, the $(D-1)$-independence number $\alpha_{D-1}=\alpha_{\ell-2}$ satisfies the bound
\begin{equation}
\label{propo:alphaOl}
\alpha_{\ell-2}(O_{\ell})\le
\left\{
\begin{array}{ll}
2\ell-2 & \mbox{for odd $\ell$,} \\
2\ell-1 & \mbox{for even $\ell$.}
\end{array}
\right.
\end{equation}
\end{proposition}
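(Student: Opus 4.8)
The plan is to exploit that the Odd graph $O_{\ell}$, being distance-regular, is walk-regular and hence $k$-partially walk-regular for every $k$; in particular the inertial-type bound \eqref{eq:thm1-pwr} applies to \emph{any} polynomial of degree at most $k=\ell-2$. I would produce a single explicit polynomial and let the two parity cases of \eqref{propo:alphaOl} fall out of one sign, namely $(-1)^{\ell}$.

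Concretely, I would take
\[
p(x)=\prod_{i=2}^{\ell-1}(x-\theta_i),
\]
which has degree exactly $\ell-2=k$ and vanishes at $\theta_2,\ldots,\theta_{\ell-1}$, so its only nonzero values on the spectrum are at $\theta_0$ and $\theta_1$. Since $\theta_0=\ell$ is the largest eigenvalue, every factor $\theta_0-\theta_i$ is positive and $p(\theta_0)>0$; since $\theta_1=-(\ell-1)$ is the least eigenvalue, all $\ell-2$ factors $\theta_1-\theta_i$ are negative, so $\operatorname{sign}p(\theta_1)=(-1)^{\ell-2}=(-1)^{\ell}$. Using the closed form $\theta_i=(-1)^i(\ell-i)$, the factors $\theta_0-\theta_i$ and $\theta_1-\theta_i$ split into products over the even and odd indices that telescope and almost entirely cancel, giving $p(\theta_1)=p(\theta_0)$ when $\ell$ is even and $p(\theta_1)=-\tfrac{\ell}{\ell-1}p(\theta_0)$ when $\ell$ is odd. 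In both cases this produces the uniform identity
\[
\tr p(A)=m_0\,p(\theta_0)+m_1\,p(\theta_1)=(-1)^{\ell}(2\ell-1)\,p(\theta_0),
\]
where $m_0=1$ and $m_1=\binom{2\ell-1}{1}-\binom{2\ell-1}{0}=2\ell-2$, hence $m_0+m_1=2\ell-1$.

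I would then feed $p$ into \eqref{eq:thm1-pwr}, whose threshold is $t=\tfrac1n\tr p(A)=(-1)^{\ell}(2\ell-1)p(\theta_0)/n$, so that $\operatorname{sign}t=(-1)^{\ell}$. For even $\ell$ the threshold is positive, the zeros $p(\theta_i)=0$ ($i\ge 2$) lie strictly below it, and $p(\theta_0)=p(\theta_1)>t$ (because $n>2\ell-1$); hence $\{i:p(\theta_i)\ge t\}=\{0,1\}$ and the bound reads $\alpha_{\ell-2}\le m_0+m_1=2\ell-1$. For odd $\ell$ the threshold is negative, the zeros lie strictly above it, $p(\theta_0)>0>t$, while $p(\theta_1)\le t$ (equivalent to $\ell\,n\ge(\ell-1)(2\ell-1)$, which is immediate); hence $\{i:p(\theta_i)\le t\}=\{1\}$, and taking this smaller side of the minimum gives $\alpha_{\ell-2}\le m_1=2\ell-2$.

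The main obstacle is the exact evaluation of $p(\theta_0)$ and $p(\theta_1)$: everything hinges on the cancellation in the telescoping products, which in turn relies on the very specific eigenvalue pattern $\theta_i=(-1)^i(\ell-i)$ interleaving positive (even-index) and negative (odd-index) values. Once the trace identity $\tr p(A)=(-1)^{\ell}(2\ell-1)p(\theta_0)$ is secured, the remaining inequalities are routine size comparisons against $n=\binom{2\ell-1}{\ell-1}$, and the parity split is automatic. A minor point to verify is that the smallest, degenerate case (e.g.\ $\ell=2$, where $k=0$ and the statement is trivial) is either handled separately or excluded.
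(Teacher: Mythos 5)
Your proof is correct, and it differs from the paper's own proof in a way worth noting. Both arguments rest on the same underlying idea: a degree-$(\ell-2)$ polynomial that is (essentially) zero at every eigenvalue except $\theta_0=\ell$ and $\theta_1=-(\ell-1)$, so that the inertial-type count collapses to $m_1$ (odd $\ell$) or $m_0+m_1$ (even $\ell$). The executions, however, are genuinely different. The paper works inside the MILP \eqref{MILP:inertiaWR}, whose formulation forces the normalization $\sum_i m_i p(\theta_i)=0$; to satisfy it, the paper perturbs the zeros to $z_i=\theta_i\pm\varepsilon$ with signs arranged so that the values $p(\theta_i)$, $i\ge 2$, are small and of one common sign, and takes $\varepsilon\in(0,1)$ to be the root of $\phi(\varepsilon)=0$; the sign vector is then $\b=(0,1,0,\dots,0)$ or $(1,1,0,\dots,0)$, giving the two bounds. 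The existence of this $\varepsilon$ and the resulting sign pattern are asserted as ``readily checked'' rather than verified. You avoid the perturbation entirely: you take the exact annihilator $p(x)=\prod_{i=2}^{\ell-1}(x-\theta_i)$ --- precisely the $\varepsilon\to 0$ limit of the paper's polynomial, a connection the paper itself remarks on when relating its solution to the minor polynomial --- and apply \eqref{eq:thm1-pwr} with its general threshold $\frac{1}{n}\operatorname{tr}p(A)$ instead of the trace-zero form. Your closed-form evaluations are right: I verified $p(\theta_1)=p(\theta_0)$ for even $\ell$ and $p(\theta_1)=-\frac{\ell}{\ell-1}p(\theta_0)$ for odd $\ell$, hence $\operatorname{tr}p(A)=(-1)^\ell(2\ell-1)p(\theta_0)$, and the remaining comparisons against $n=\binom{2\ell-1}{\ell-1}$ are as routine as you claim (in fact you need not even determine which side of the minimum is smaller, since the minimum is bounded above by either side). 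The trade-off: your argument is self-contained and fully rigorous, whereas the paper's version documents the structure of the actual MILP optimum, which is the expository purpose of that section. One small fix: state $\ell\ge 3$ explicitly, since the strict inequality $n>2\ell-1$ that you use fails at $\ell=2$ (where the statement is trivial anyway, as you note).
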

\begin{proof}
We claim that, for such graphs, the polynomial $p\in \Real_{\ell-2}[x]$ obtained from the MILP problem has zeros $z_i$ for $i=2,\ldots,\ell-1$, where $z_{i}=\theta_{i}+(-1)^{\lceil\frac{i+1}{2} \rceil}\varepsilon$ for odd $i$,
$z_{i}=\theta_{i}+(-1)^{\lfloor\frac{i-1}{2} \rfloor}\varepsilon$ for even $i$, and $\varepsilon$ is the solution in $(0,1)$ of the equation
\begin{equation}
\label{tr(p)=0}
\phi(\varepsilon)=\sum_{i=0}^{\ell} m_i p(\theta_i)=0.
\end{equation}
The reason is that this polynomial satisfies the main condition \eqref{tr(p)=0} of the MILP problem,
and ($p$ or $-p$) minimizes  the number of 1's in the vector $\b$. More precisely, from the definition of $p$ it is readily checked that
\begin{itemize}
\item
If $\ell$ is odd, then $-p(\theta_1)>0$ and $-p(\theta_i)<0$ for $i=0,2,\ldots,\ell$.
\item
If $\ell$ is even, then $p(\theta_i)>0$ for $i=0,1$, and $p(\theta_i)<0$ for $i=2,\ldots,\ell$.
\end{itemize}
In other words, in the first case $\b=(0,1,0,\ldots,0)$, and hence, $\alpha_{\ell-2}\le m_1=2\ell-2$;
whereas, in the second case, $\b=(1,1,0,\ldots,0)$, and hence, $\alpha_{\ell-2}\le m_0+m_1=2\ell-1$,
as claimed.
\end{proof}

In Table \ref{tableOddgraph} we show some examples of the results obtained for $\ell=4,\ldots,8,10,12,14$.
For the first values, we also indicate the polynomial $\phi(\varepsilon)$, which  is shown to be monic with a convenient scaling (obtained dividing \eqref{tr(p)=0} by $\pm {2\ell-1\choose \ell-1}$), together with its ``key zero" $\varepsilon_0\in (0,1)$.
Also, we compare the obtained MILP bound  with the exact value of $\alpha_k$.

\begin{table}[H]
	\centering
\begin{tabular}{ |l|l|l|c| }
 \hline
\multirow{4}{*}{$\alpha_2(O_{4})$} &  Bound from the MILP & $7=m_0+m_1$ \\
  & Polynomial $\varepsilon^2+3\varepsilon-2$  & 0.561552813 \\
  & Exact value $\alpha_2$ & 7 \\
\hline
\multirow{4}{*}{$\alpha_3(O_{5})$} & Bound from the MILP & $8=m_1$ \\
  & Polynomial $\varepsilon^3-12\varepsilon+4$  &  0.336508805 \\
  & Exact value $\alpha_3$ & 7 \\
\hline
\multirow{3}{*}{$\alpha_4(O_{6})$} & Bound from the MILP & $11=m_0+m_1$ \\
 & Polynomial $\varepsilon^4+4\varepsilon^3 -46\varepsilon+12$ & 0.238605627 \\
 & Exact value $\alpha_4$ & 11 \\
\hline	
\multirow{3}{*}{$\alpha_5(O_{7})$} & Bound from the MILP & $12=m_1$ \\
 & Polynomial $\varepsilon^5-\varepsilon^4-41\varepsilon^3+41\varepsilon^2+246\varepsilon-36$ &   0.1434068868 \\
 & Exact value $\alpha_5$ & 12 \\
\hline
\multirow{3}{*}{$\alpha_6(O_{8})$} & Bound from the MILP & $15=m_0+m_1$ \\
 & Polynomial $\varepsilon^6+7\varepsilon^5-45\varepsilon^4-287\varepsilon^3+256\varepsilon^2+1372\varepsilon-144$  &  0.1032025452 \\
 & Exact value $\alpha_6$ & 15 \\
\hline
\hline
$\alpha_8(O_{10})$ & Bound from the MILP & $19=m_0+m_1$ \\
  &  Exact value $\alpha_b$ & 19 \\
\hline
$\alpha_{10}(O_{12})$ & Bound from the MILP & $23=m_0+m_1$ \\
 &  Exact value $\alpha_{10}$ & 23 \\
\hline
$\alpha_{12}(O_{14})$ & Bound from the MILP & $27=m_0+m_1$ \\
 &  Exact value $\alpha_{12}$ & 27 \\
\hline
\end{tabular}
 \caption{Infinite family of Odd graphs for which the output from MILP \eqref{MILP:inertiaWR} gives the best polynomials for upper bounding $\alpha_k$.} \label{tableOddgraph}
 \end{table}

Note that, when $\ell$ increases, $\varepsilon$ tends to zero and hence the target polynomial $p$ is closer and closer to the minor polynomial $f_k$ up to a constant multiplicative factor.
This gives an interesting view of the relationship between the inertial- and ratio-type methods. Moreover, the same result of Proposition \ref{propo:alphaOl} can also be proved by using only the minor polynomials, see \cite{fiol20}.
Also, notice that, except for the Odd graph $O_5$, all the obtained bounds are tight.
In fact,  in the even case $\ell=2k$, one can check that the vertices at maximum distance $2k-1$ from each other constitute a $2-(4k-1,2k-1,k-1)$ symmetric design (see \cite{h86} for its definition). Such combinatorial structures exist, at least, for $k=2,\ldots,7$  \cite{glasgow}, which give the optimal values in Table \ref{tableOddgraph} when $\ell=2,4,\ldots,14$. In particular, the 7 vertices of $O_4$  correspond to the lines (or the points) of the Fano plane (see Figure \ref{fig:fano}), and the 11 vertices of $O_6$ are the points of the Payley biplane.

\begin{figure}
	\begin{center}
		\includegraphics[scale=0.5]{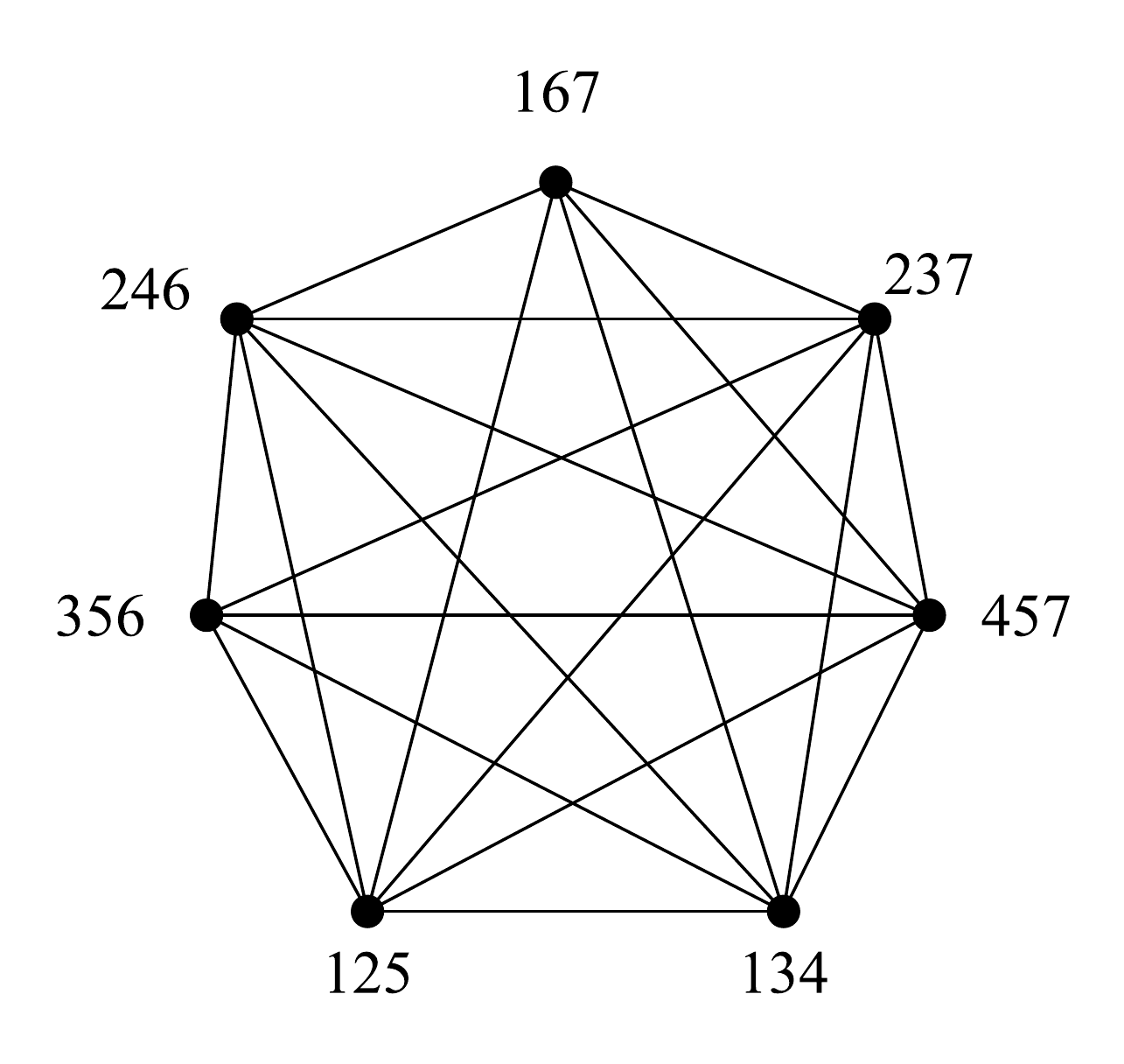}
	\end{center}
	\caption{Maximal set of vertices in $O_4$ ($\alpha_2=7$) at mutual distance 3 (each pair of vertices---lines of the Fano plane $F_7$---has exactly one common digit---vertex of $F_7$).}
\label{fig:fano}
\end{figure}

Another infinite family of graphs for which \eqref{MILP:inertiaWR} behaves nicely is a particular family of Cayley graphs. Let $G$ be a finite group with identity element $\mathbf{1}$ and let $S\subseteq G$. The (directed) Cayley graph $\Gamma(G,S)$ is a graph with vertex set $G$ and an arc for every pair $u,v\in G$ such that $uv^{-1}\in S$. If $S$ is inverse-closed and does not contain $\mathbf{1}$, then $\Gamma(G,S)$ is symmetric and loopless, in which case we may view it as a simple undirected graph. Consider for each $n \ge 3 $ the Cayley graph $\Gamma_n := \Gamma(D_{2n},S_{2n})$ on the dihedral group $D_{2n} = \langle a,b \mid a^n = b^2 = (ab)^2 = \mathbf{1}\rangle$ and inverse-closed subset $S_{2n} = \{a,a^{-1},b\} \subset D_{2n}$. Then $\{\Gamma_n\}_{n\ge 3}$ is a family of connected, $3$-regular graphs on $2n$ vertices.
The graph $\Gamma_n$ is known as the {\em prism} graph \cite{g87} and the above construction as a Cayley graph is due to Biggs \cite[pag. 126]{biggs}.  These graphs are vertex-transitive and, hence,  walk-regular, but not distance-regular. Thus the Delsarte LP bound does not apply. Table \ref{tableCayley} shows the behaviour of the MILP bound on $\Gamma_n$ for $3\le n \le 16$. Note that the optimal value equals exactly $\alpha_2$ when $n \neq 2 \mod 4$. This trend continues if we solve the MILP for larger values of $n$.
An easy way to prove that the exact values of $\alpha_2$ are those expected from the table ($\alpha_2=2k$ if $n=4k+i$ for $i=0,1,2$, and $\alpha_2=2k+1$ if $n=4k+3$) is to view $\Gamma_n$ as the Cayley graph on the Abelian group $\Z_n\times \Z_2$ with generating set $S=\{\pm(1,0),\pm(0,1)\}$. Then the graph can be represented by a plane tessellation with rectangles $n\times 2$ \cite{yfma85} (or embedding on the torus) which allows us a neat identification of the maximum $2$-independent vertex sets.

\begin{table}[h]
\begin{center}
\begin{tabular}{|l|c|c|c|c|c|c|c|c|c|c|c|c|c|c|}
\hline
$n$ & 3&4 & 5 & 6 & 7 & 8 & 9 & 10 & 11 & 12 & 13 & 14 & 15 & 16\\
\hline
MILP bound & 1 & 2 & 2 & 4 & 3 & 4 & 4 & 6 & 5 & 6 & 6 & 8 & 7 & 8\\
$\alpha_2$ & 1 & 2 & 2 & 2 & 3 & 4 & 4 & 4 & 5 & 6 & 6 & 6 & 7 & 8\\
\hline
\end{tabular}
\end{center}
\caption{An infinite family of Cayley graphs $\Gamma_n$ for which the MILP bound equals $\alpha_2$ when $n \neq 2 \mod 4$.}
\label{tableCayley}
\end{table}

\subsection{Second inertial-type bound for $\chi_k$}
\label{sec:extendingElphickWocjanforchik}

The bound (\ref{naiveirectlowerbound}) can be strengthened when $k = 1$ and $p_k(A) = A$ as follows (see Elphick and Wokjan \cite[Th. 1]{elphick}). Let $n^+=|i:\lambda_i>0|$, $n^0=|i:\lambda_i=0|$, and $n^-=|i:\lambda_i<0|$. Then,
\begin{equation}
\label{bound-EW}
\chi(G) \ge 1 + \max \left(\frac{n^+}{n^-} , \frac{n^-}{n^+}\right) \ge \frac{n}{n^0 + \min\{n^+ , n^-\}},
\end{equation}
with equality for the two bounds only if $n^0 = 0$, since
\[
1 + \max \left(\frac{n^+}{n^-} , \frac{n^-}{n^+}\right) =
\frac{n^+  + n^-}{\min\{n^+ ,  n^-\}}.
\]
The goal of this section is to extend the inertial-type bound \eqref{bound-EW} to the distance chromatic number $\chi_k(G)$ in the case when $G$ is $k$-partially walk-regular.
\begin{theorem}	\label{theo:extended-EW}
Let $G$ be a $k$-partially walk-regular graph with adjacency matrix eigenvalues $\lambda_1\geq \cdots \geq \lambda_n$. Let $p_k\in \mathbb{R}_k[x]$ such that $\sum_{i=1}^{n} p_k(\lambda_i) = 0$. Then,
\begin{equation}\label{extendedElphickWocjan}
	\chi_k\geq 1+\max{ \left(\frac{|j:p_k(\lambda_j)< 0|}{|j:p_k(\lambda_j)> 0|}\right)}.
\end{equation}
\end{theorem}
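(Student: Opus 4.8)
The plan is to reduce the statement to a purely linear-algebraic inertia inequality for a single symmetric matrix attached to $G$, in the spirit of the Elphick--Wocjan proof of \eqref{bound-EW}. Write $c=\chi_k=\chi(G^k)$ and fix a proper colouring of $G^k$ with colour classes $U_1,\dots,U_c$; by definition each $U_i$ is a $k$-independent set of $G$, so any two distinct vertices of $U_i$ are at distance greater than $k$ in $G$. The whole argument runs on the matrix $M:=p_k(A)$, whose eigenvalues are $p_k(\lambda_1),\dots,p_k(\lambda_n)$ by spectral mapping; accordingly I set $n^+=|j:p_k(\lambda_j)>0|$, $n^-=|j:p_k(\lambda_j)<0|$ and $n^0=|j:p_k(\lambda_j)=0|$, so that $(n^+,n^0,n^-)$ is exactly the inertia of $M$.

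First I would record the two structural properties of $M$ that make the colouring usable. Since $\deg p_k\le k$, for vertices $u,v$ with $\mathrm{dist}_G(u,v)>k$ every power $(A^\ell)_{uv}$ with $0\le\ell\le k$ vanishes (there is no $u$--$v$ walk of length at most $k$), whence $M_{uv}=0$; in particular $M_{uv}=0$ whenever $u,v$ lie in a common class $U_i$. For the diagonal I use $k$-partial walk-regularity: the closed-walk counts $(A^\ell)_{vv}$ with $\ell\le k$ are independent of $v$, so $(p_k(A))_{vv}$ is constant and equal to $\tfrac1n\tr p_k(A)=\tfrac1n\sum_i p_k(\lambda_i)=0$ by hypothesis. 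Consequently the principal submatrix of $M$ indexed by each $U_i$ is the zero matrix; equivalently, writing $W_i\subseteq\mathbb{R}^n$ for the coordinate subspace spanned by $U_i$ and $\Pi_{W_i}$ for the orthogonal projection onto it, one has $\Pi_{W_i}M\Pi_{W_i}=0$. Thus $\mathbb{R}^n=W_1\perp\cdots\perp W_c$ is an orthogonal decomposition into subspaces that are totally isotropic for $M$.

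The heart of the proof, and the step I expect to be the main obstacle, is the inertia inequality $n^-\le(c-1)\,n^+$ (and, symmetrically, $n^+\le(c-1)\,n^-$), which is exactly \eqref{bound-EW} transplanted from $A$ to the block-hollow matrix $M$. I would argue as follows. Let $P,N,K$ be the positive, negative and null eigenspaces of $M$, of dimensions $n^+,n^-,n^0$. On $N\oplus K$ the form $x\mapsto x^{\top}Mx$ is negative semidefinite, so any $x\in W_i\cap(N\oplus K)$ satisfies $x^{\top}Mx\le 0$; but $x\in W_i$ forces $x^{\top}Mx=0$ by isotropy, whence $x\in K$. Thus $W_i\cap(N\oplus K)=W_i\cap K$, and the dimension formula gives $\dim(W_i\cap K)\ge\dim W_i+(n^-+n^0)-n=|U_i|-n^+$. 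Since the $W_i$ are mutually orthogonal, the subspaces $W_i\cap K$ are independent inside $K$, so $\sum_i\dim(W_i\cap K)\le n^0$. Summing the previous estimate over $i$ yields $n^0\ge n-c\,n^+$, i.e. $n^-=n-n^0-n^+\le(c-1)\,n^+$, which is precisely $\chi_k\ge 1+n^-/n^+$.

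Replacing $p_k$ by $-p_k$ (which still satisfies the trace hypothesis and only swaps $n^+$ and $n^-$) gives the companion inequality, so that in fact $\chi_k\ge 1+\max\{n^+/n^-,\,n^-/n^+\}$. The only points needing a little care are the degenerate cases $n^+=0$ or $n^-=0$: a symmetric matrix with zero diagonal and a semidefinite spectrum must vanish, so these correspond to $M=0$ and are vacuous. I expect the genuinely delicate part to be isolating the clean dimension count of the third paragraph: the naive ``one totally isotropic subspace per colour'' estimate yields only the weaker inertia-type bound with denominator $n^0+\min\{n^+,n^-\}$, and the sharpening to $1+\max\{n^+/n^-,\,n^-/n^+\}$ crucially exploits that the $W_i$ jointly tile $\mathbb{R}^n$.
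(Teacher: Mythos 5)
Your proof is correct, and it takes a genuinely different route from the paper's. The paper establishes the key inequality $(\chi_k-1)\,|j:p_k(\lambda_j)>0| \ \ge\ |j:p_k(\lambda_j)<0|$ by transplanting the Elphick--Wocjan unitary machinery: since $p_k(A)$ is a weighted adjacency matrix of $G^k$ with zero diagonal (the same two structural facts you verify at the start), Theorem 6 of \cite{ew2013} supplies unitaries $U_1,\dots,U_{\chi_k-1}$ with $\sum_i U_i\,p_k(A)\,U_i^* = -p_k(A)$; the authors then split $p_k(A)$ into positive and negative semidefinite parts, conjugate by the projector $P^-$ onto the negative eigenspace, and conclude with rank inequalities ($\rank$ of a sum, $\rank$ of a product, and $X\succeq Y\succeq 0$ implies $\rank(X)\ge\rank(Y)$). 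You instead obtain the same inequality by elementary dimension counting: the colour classes give an orthogonal decomposition of $\mathbb{R}^n$ into totally isotropic coordinate subspaces $W_i$; isotropy plus negative definiteness of $M$ on $N$ forces $W_i\cap(N\oplus K)=W_i\cap K$, the dimension formula gives $\dim(W_i\cap K)\ge |U_i|-n^+$, and summing over the $c$ mutually orthogonal classes yields $n^0\ge n-c\,n^+$, i.e.\ $n^-\le(c-1)\,n^+$. I checked the steps (spectral mapping, the block-zero structure, the intersection identity, subadditivity inside $K$, and the degenerate case via ``semidefinite with zero diagonal implies zero'') and they are all sound; your symmetric variant via $-p_k$ also matches the paper's remark following the theorem. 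As for what each approach buys: yours is self-contained and arguably more transparent, being a direct refinement of the standard proof of the Cvetkovi\'c inertia bound with no external lemma; the paper's proof is shorter modulo the citation and, by running through the unitary identity, stays within a framework known to extend to quantum colourings, which is relevant to this paper's emphasis on quantum parameters --- your coordinate-subspace argument has no obvious analogue there, since quantum colourings do not provide genuine colour classes.
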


\begin{proof}
An analogous argument as it is used in \cite[Th. 1]{elphick} applies here by using $p_k(A)$ instead of $A$. The proof in \cite{elphick} relies on the fact that there exist $\chi$ unitary matrices $U_i$ such that:
\[
\sum_{i=1}^{\chi-1} U_iAU_i^* = -A.
\]
Now we consider $p_k(A)$ instead of $A$ and a $k$-partially walk-regular graph $G$ with $\sum_{i=1}^{n} p_k(\lambda_i) = 0$ (recall that $p_k(A)$ has constant zero diagonal if and only if $\text{tr}p_k (A) = 0$, or equivalently, $\sum_{i=1}^{n} p_k(\lambda_i) = 0$)). Then it follows that
\begin{equation}
\label{toolElphickWocjanextended}
\sum_{i=1}^{\chi_k-1} U_ip_k(A)U_i^* = -p_k(A).
\end{equation}
Observe that the above holds because Theorem 6 in \cite{ew2013} is also valid for weighted adjacency matrices with zero diagonal.
Let $v_1,\ldots,v_n$ be the eigenvectors of unit length corresponding to the eigenvalues $p_k(\lambda_1)\geq \cdots \geq p_k(\lambda_n)$. Let $p_k(A)=p_k(B)-p_k(C)$, where
$$
p_k(B)=\displaystyle\sum_{i=1}^{|j:p_k(\lambda_j)> 0|}p_k(\lambda_i)v_iv_i^{\ast},\qquad p_k(C)=\displaystyle\sum_{i=n-|j:p_k(\lambda_j)< 0|+1}^{n}-p_k(\lambda_i)v_iv_i^{\ast}.
$$
Observe that $p_k(B)$ and $p_k(C)$ are positive semidefinite matrices, and we also know that $\text{rank}(p_k(B))=|j:p_k(\lambda_j)> 0|$ and $\text{rank}(p_k(C))=|j:p_k(\lambda_j)< 0|$. Denote by $P^{+}$ and $P^{-}$ the orthogonal projectors onto the subspaces spanned by the eigenvectors corresponding to the positive and negative eigenvalues
of $p_k(A)$, respectively:
$$
P^{+}=\sum_{i=1}^{\text{rank}(p_k(B))}v_iv_i^{\ast} \qquad \text{and}\qquad P^{-}=\sum_{i=n-\text{rank}(p_k(C))+1}^{n}v_iv_i^{\ast}.
$$
Note that
$$
p_k(B)=P^{+}p_k(A)P^{+}\qquad \text{and}\qquad p_k(C)=-P^{-}p_k(A)P^{-}.
$$
Then, equation (\ref{toolElphickWocjanextended}) can be rewritten as follows
$$
\displaystyle \sum_{i=1}^{\chi_k-1}U_ip_k(B)U_i^{\ast}-\displaystyle \sum_{i=1}^{\chi_k-1}U_ip_k(C)U_i^{\ast}=p_k(C)-p_k(B),
$$
and, if we multiply both sides by $P^{-}$, we obtain
$$
P^{-}\displaystyle \sum_{i=1}^{\chi_k-1}U_ip_k(B)U_i^{\ast}P^{-}-P^{-}\displaystyle \sum_{i=1}^{\chi_k-1}U_ip_k(C)U_i^{\ast}P^{-}=p_k(C).
$$
Now, since we know that $P^{-}\displaystyle \sum_{i=1}^{\chi_k-1}U_ip_k(C)U_i^{\ast}P^{-}$ is positive semidefinite, we obtain
$$
P^{-}\displaystyle \sum_{i=1}^{\chi_k-1}U_ip_k(B)U_i^{\ast}P^{-} \succeq  p_k(C)
$$
(where, with $X,Y$ being matrices, $X\succeq Y$ means that $X-Y$ is positive semidefinite).
Finally, using the fact that the rank of a sum is less or equal than the sum of the ranks of the summands, that the rank of a product is less than or equal to the minimum of the ranks of the factors, and Lemma 2 in \cite{elphick} (if $X,Y\in \C^{n\times n}$ are positive semidefinite and  $X\succeq Y$, then $\rank(X)\ge \rank(Y)$), we obtain the desired inequality
$$
\left(\chi_k-1\right)|j:p(\lambda_j)> 0|\geq |j:p(\lambda_j)< 0|.
$$
\end{proof}

Note that the bound from Theorem \ref{theo:extended-EW} is equivalent to
\begin{equation*}
\label{extendedElphickWocjan2}
\chi_k\geq 1+\max{ \left(\frac{|j:p_k(\lambda_j)> 0|}{|j:p_k(\lambda_j)< 0|},\frac{|j:p_k(\lambda_j)< 0|}{|j:p_k(\lambda_j)> 0|}\right)}.
\end{equation*}
Observe also that the maximum  is taken over all polynomials $p_k$.

Regarding the second inertial-type bound \eqref{extendedElphickWocjan}, we note that not all graphs allow for an improvement of such bound due to the presence of zeros, and in that case one can better use the inertial-type bound \eqref{eq:thm1} which we optimize for $\alpha_k$ (and hence also for $\chi_k$) in Section \ref{subsec:opt-first}.

\subsubsection{Optimization of the second inertial-type bound}\label{optsecondinertialbound}

Similarly to our discussion in Section \ref{subsec:opt-first} for the optimization of the first inertial-type bound, we can use MILPs to optimize the polynomials appearing in the second inertial-type bound \eqref{extendedElphickWocjan}. For this bound, however, we must solve $n$ MILPs to obtain the best possible bound.
The procedure goes as follows: for each $\ell \in \{1,...,n-1\}$, we solve the following MILP:

\begin{center}
\begin{equation}
\boxed{
\begin{array}{rl}
{\tt maximize} & 1 + \frac{n-\1^\Tr \b}{\ell} \\
{\tt subject\ to} & \sum_{j=1}^n \sum_{i = 0}^k a_i \lambda_j^i = 0 \\
 & \sum_{i = 0}^k a_i \lambda_j^i - M b_j+\epsilon \leq 0, \quad j=1,...,n\\
 & \sum_{i = 0}^k a_i \lambda_j^i - M c_j \leq 0, \quad\qquad j=1,...,n\\
 & \sum_{i = 1}^n c_i = \ell \\
 & \b \in \{0,1\}^{n}, \quad \c \in \{0,1\}^{n}
\end{array}
}
\label{MILP:inertia2}
\end{equation}
\end{center}

Unlike the previous MILP \eqref{MILP:inertia}, which optimized the first inertial-type bound for $\chi_k$, for the above MILP we require $\b \in \{0,1\}^{n},\ \c \in \{0,1\}^{n}$ since here we look at all eigenvalues, including the repeated ones.
As before, the $a_i$ are the coefficients of the polynomial of degree at most $k$, say $p(x) = a_k x^k +\cdots + a_0$, and the first restriction is the hypothesis of the theorem, that is  $\tr p(A) = 0$. The second restriction implies that if $p(\lambda_j) \ge 0$, then $b_j = 1$, whereas the third restriction implies that if $p(\lambda_j) > 0$, then $c_j = 1$.
Thus, we have:
\begin{itemize}
\item
$|j:p_k(\lambda_j)> 0|=\1^\Tr\c=\sum_{i = 1}^n c_i = \ell$ (fourth restriction),
\item
$|j:p_k(\lambda_j)= 0|=\1^\Tr(\b-\c)$, and
\item
$|j:p_k(\lambda_j)< 0|=n-\1^\Tr \b$,
\end{itemize}
from where we set the function to maximize.

In theory, this MILP is a sound way to approximate Theorem \ref{theo:extended-EW}. However, in practice the limited precision of MILP solvers leads to implementation problems for certain graphs. Consider for example the prism graph $\Gamma_4$, for which MILP \eqref{MILP:inertia} was tight. Solving MILP \eqref{MILP:inertia2} with Gurobi for $k=2$, we find optimal value 7. This is clearly not a valid lower bound, as $\chi_2(\Gamma_4) = 4$. The corresponding optimal parameters are $p_2(x)=833.3249999999999x^2-1666.6499999999999x-2499.975$ and $b = (1,0,0,0,0,0,0,1)$, $c=(0,0,0,0,0,0,0,1)$. In other words, eigenvalue 3 is supposedly a root of $p_2$ and the other eigenvalues are not. However, due to rounding this is not exactly true: 3 is not a root of $p_2$, but it is a root of the polynomial $833 \frac{1}{3} x^2-1666\frac{2}{3}x-2500$ (or its monic equivalent $x^2-2x+3$), which has eigenvalue $-1$ as a second root. Eigenvalue 1 with multiplicity 3 is then the only eigenvalue such that this polynomial is negative, so the bound becomes $1+\frac{3}{1}=4$, which is tight.

In general, it is hard to prevent these types of errors, as no MILP solver has perfect accuracy. For $k=2$, we will consider a restriction of MILP \eqref{MILP:inertia2}, where this can be detected and prevented. For a regular graph $G$ with eigenvalues $d=\lambda_1\ge \lambda_2 \ge \dots \ge \lambda_n$, consider the polynomial $p_2(x)=x^2+bx-d$. This polynomial has two distinct roots $x_1<0<x_2$ such that $x_1x_2=-d$ and $b=-(x_1+x_2)$. Moreover, note that for any choice of $b$, it satisfies the trace condition $\sum_{i=1}^n p_2(\lambda_i)=0$. Therefore, it corresponds to a valid solution of MILP \eqref{MILP:inertia2}. Since the optimal polynomial is fixed up to the coefficient $b$, we can now calculate which eigenvalues are root pairs of $p_2$ and fix the bound accordingly.

To find an optimal value of $b$ we do not need to solve an MILP. Instead, the following greedy strategy suffices. Suppose $\lambda$ is the smallest negative eigenvalue such that $p_2(\lambda)<0$. To maximize the numerator of Equation \eqref{extendedElphickWocjan}, it is better to choose $x_1$ close to $\lambda$, as this will increase the value of $x_2$. For every negative eigenvalue $\lambda$, we therefore compute the bound for $x_1 = \lambda-\varepsilon$ with $\varepsilon>0$ small. By placing $x_1$ or $x_2$ close to 0, we also cover the cases where exactly all negative or all positive eigenvalues lie in the negative range of $p_2$. Finally, we set every eigenvalue as a root of $p_2$ and compute the corresponding lower bound. Observe that this strategy can easily be adapted for the polynomial $-p_2$, which also satisfies the trace condition. To obtain the best value bound, we consider all above cases for $p_2$ and $-p_2$ and take the maximum.

In Table \ref{tableMILP}, we compute the corresponding upper bound on $\alpha_2$ for the named Sage graphs and compare it to previous results. Note that these values are an upper bound for the actual optimum of MILP \eqref{MILP:inertia2}, as we restricted the optimal polynomial. On this particular set of graphs, the bound generally performs better than MILP \eqref{MILP:inertia}, most notably on the Gosset graph and Klein 7-regular graph. Like MILP \eqref{MILP:inertia}, MILP \eqref{MILP:inertia2} is tight for the incidence graphs of projective planes $PG(2,q)$ with $q$ a prime power and the prism graphs $\Gamma_n$ with $n\neq 2\mod 4$. Note that the latter are generalized Petersen graphs with parameters $(n,1)$. The bound is also tight for (generalised) Petersen graphs with $(n,k)\in \{(5,2),(8,3),(10,2)\}$. The second graph is also known as the M\"obius-Kantor graph and is walk-regular, but not distance-regular.

\subsection{First ratio-type bound for $\chi_k$}

Let $G$ be a graph with eigenvalues $\lambda_1\ge \cdots\ge \lambda_n$ and let $[2,n]=\{2,3,\ldots,n\}$. Given a polynomial $p_k\in \Real_k[x]$, recall the following parameters:
$W(p_k) = \max_{u\in V}\{(p_k(A))_{uu}\}$,
$w(p_k) = \min_{u\in V}\{(p_k(A))_{uu}\}$,
$\Lambda(p_k) = \max_{i\in[2,n]}\{p_k(\lambda_i)\}$,
$\lambda(p_k) = \min_{i\in[2,n]}\{p_k(\lambda_i)\}$.

Then notice that, for a regular graph, the upper bound (\ref{eq:thm2}) for $\alpha_k$  of Theorem \ref{thmACF2019}$(ii)$\cite{acf2019} becomes (\ref{bound:secondchi}). In the next theorem we show that such inequality also holds for a general graph.

\begin{theorem}
Let $G$ be a graph with $n$ vertices, adjacency matrix $A$, and eigenvalues
$\lambda_1\ge\cdots \ge \lambda_n$. Let $p_k\in \Real_k[x]$ such that $p_k(\lambda_1) > p_k(\lambda_i)$ for $i=2,\ldots,n$. Then,
\begin{equation}\label{bound:secondchi}
\chi_k \geq \frac{p_k(\lambda_1)-\lambda(p_k)}{W(p_k)-\lambda(p_k)}.
\end{equation}
\end{theorem}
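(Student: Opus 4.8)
The plan is to reduce the statement for general graphs to the regular case already established in Theorem \ref{thmACF2019}$(ii)$, by passing to a clever weighted quotient / averaging argument, or alternatively to give a direct proof via the chromatic number's defining property. I would start from the relationship $\chi_k=\chi(G^k)$ and the elementary bound $\alpha(H)\ge n/\chi(H)$ applied to $H=G^k$, which gives $\alpha_k\ge n/\chi_k$, equivalently $\chi_k\ge n/\alpha_k$. The idea is then to feed in an upper bound on $\alpha_k$ of ratio type; the subtlety is that Theorem \ref{thmACF2019}$(ii)$ requires regularity, which we no longer assume here, so a naive substitution is not available.

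First I would set up the interlacing/quotient framework that underlies the ratio bound. Let $U$ be a maximum $k$-independent set, $|U|=\alpha_k$, and consider the partition of $V$ into $U$ and $V\setminus U$. Writing $\vu$ for an indicator-type test vector supported appropriately (mimicking the direct proof of \eqref{Hoffman-wr} given earlier in the excerpt, with the vector $\vu=(x\1\,|\,\1)^\top$), I would evaluate the Rayleigh-type quotient $\langle \vu, p_k(A)\vu\rangle/\|\vu\|^2$. Because $U$ is $k$-independent, the block of $p_k(A)$ indexed by $U$ is diagonal, with entries bounded above by $W(p_k)=\max_u (p_k(A))_{uu}$; this is exactly where the parameter $W(p_k)$ (rather than a single degree value) enters, and it is what lets the argument survive the loss of regularity. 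The off-diagonal and $V\setminus U$ contributions are controlled by the eigenvalue extremes $p_k(\lambda_1)$ and $\lambda(p_k)=\min_{i\ge 2}p_k(\lambda_i)$.

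The key step is the eigenvalue estimate: projecting onto the top eigenvector and its orthogonal complement, one gets $\langle \vu, p_k(A)\vu\rangle \ge p_k(\lambda_1)\langle \vu,\w\rangle^2/\|\w\|^2 + \lambda(p_k)(\|\vu\|^2-\langle \vu,\w\rangle^2/\|\w\|^2)$, where $\w$ is the Perron eigenvector. Balancing this against the $U$-block bound $\langle \vu, p_k(A)\vu\rangle \le \alpha_k W(p_k)$ and optimizing over the free scalar $x$ yields an inequality of the form $\alpha_k(W(p_k)-\lambda(p_k)) \ge$ (a positive multiple of) $(p_k(\lambda_1)-\lambda(p_k))$. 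Rearranging and combining with $\chi_k\ge n/\alpha_k$ should produce precisely \eqref{bound:secondchi}. I expect the main obstacle to be the irregular case of this projection step: when $G$ is not regular the Perron vector $\w$ is not constant, so $\langle \vu,\w\rangle$ does not simplify to the clean form available in the regular proof, and one must argue that the worst case over admissible test vectors still delivers the stated ratio. Handling this carefully — likely by choosing the support and weights of $\vu$ to align with $\w$ on $U$, or by invoking an eigenvalue interlacing argument on the quotient matrix associated to the partition $\{U,V\setminus U\}$ — is where the real work lies; the remaining algebra is routine rearrangement.
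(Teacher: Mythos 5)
Your strategy cannot prove this theorem, and the obstruction is already visible for $k=1$ with $p_1(x)=x$. You propose to factor the argument through $\chi_k \ge n/\alpha_k$ together with an extension of the ratio-type bound \eqref{eq:thm2} to irregular graphs. Both halves fail. First, the extension you need is false: for the star $K_{1,n-1}$ one has $W(p_1)=0$, $p_1(\lambda_1)=\sqrt{n-1}$, $\lambda(p_1)=-\sqrt{n-1}$, so the claimed bound would give $\alpha \le n/2$, while in fact $\alpha = n-1$; no choice of test vector can rescue a false statement, which is exactly why Theorem \ref{thmACF2019}$(ii)$ carries a regularity hypothesis. Second, and more decisively, the reduction itself is too lossy: for the same star, $n/\alpha_k = n/(n-1) < 2 = \frac{p_1(\lambda_1)-\lambda(p_1)}{W(p_1)-\lambda(p_1)}$, so the quantity $n/\alpha_k$ is \emph{strictly smaller} than the bound you are trying to prove. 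Hence no argument that passes through $\chi_k \ge n/\alpha_k$ — however carefully you align test vectors with the Perron vector or interlace on the two-part partition $\{U, V\setminus U\}$ — can yield \eqref{bound:secondchi} for irregular graphs. The cancellation of $n$ you are counting on is precisely the step that is only valid in the regular case, and the irregular case is the entire content of the theorem.

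The paper's proof avoids $\alpha_k$ altogether and argues directly on the coloring, following Haemers \cite[Th.~4.1(i)]{H1995}. Let $V_1,\ldots,V_{\chi_k}$ be the color classes of $G^k$, let $\vecnu$ be the Perron eigenvector of $A$, and let $S$ be the $n\times\chi_k$ matrix whose $j$-th column is the restriction of $\vecnu$ to $V_j$, normalized; set $B = S^\top p_k(A) S$. Three facts conclude: (i) $p_k(\lambda_1)$ is an \emph{exact} eigenvalue of $B$ — this is what the Perron weighting buys, and it is the step that survives irregularity; (ii) each diagonal entry of $B$ is a weighted average of diagonal entries of $p_k(A)$ over one color class, because the off-diagonal entries of $p_k(A)$ within a class vanish (vertices in a class are at distance greater than $k$ and $\deg p_k\le k$), so $(B)_{ii}\le W(p_k)$; (iii) by interlacing, every eigenvalue of $B$ lies in $[\lambda(p_k),\,p_k(\lambda_1)]$. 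Then
\begin{equation*}
\chi_k\, W(p_k) \;\ge\; \operatorname{tr} B \;\ge\; p_k(\lambda_1) + (\chi_k-1)\lambda(p_k),
\end{equation*}
which rearranges to \eqref{bound:secondchi}. If you want to repair your write-up, replace the partition $\{U,V\setminus U\}$ by the partition into the $\chi_k$ color classes and your indicator weights by Perron weights; at that point you are running the paper's argument.
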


\begin{proof}
 The proof uses an argument which follows the main lines of reasoning as Haemers does for deriving a lower bound for $\chi$ of any graph in \cite[Th. 4.1 (i)]{H1995}. However, as the last steps are different, we include the complete proof.
Let $\vecnu=(\nu_1,\ldots, \nu_n)$  be the (positive) Perron (column) $\lambda_1$-eigenvector of $A$. Let $V_1,\ldots, V_{\chi_k}$ be the color classes of $G^k$. Let $\tilde{S}$ be the $n\times \chi_k$ matrix with entries
$$
(\tilde{S})_{uj}=\left\{
\begin{array}{ll}
\nu_u, & \mbox{if $u\in V_j$},\\
0, & \mbox{otherwise.}
\end{array}
\right.
$$
Notice that, with the appropriate length of the  vector $\1$, we have
$$
\textstyle
\tilde{S}\1=\vecnu\quad \mbox{and} \quad \tilde{S}^{\top}\vecnu=(\sum_{u\in V_1}\nu_u^2,\ldots,\sum_{u\in V_{\chi_k}}\nu_u^2 )^{\top}.
$$
Let $S$ be the matrix $\tilde{S}$ with all its normalized column vector. That is, $S=\tilde{S}D^{\frac{1}{2}}$ where $D=\tilde{S}^{\top}\tilde{S}=\diag(\sum_{u\in V_1}\nu_u^2,\ldots,\sum_{u\in V_{\chi_k}}\nu_u^2 )$.
Now consider the $\chi_k\times \chi_k$ matrix $B=S^{\top}p_k(A) S$ which, as it is readily checked by using the above, has eigenvalue $p_k(\lambda_1)$ with eigenvector $D^{\frac{1}{2}}\1$. Moreover, since each principal submatrix of $B$ corresponding to a color class has all its off-diagonal entries equal to zero, we have
\begin{align*}
(B)_{ii} & =\sum_{u\in V_i} (S^{\top})_{iu} (p_k(A))_{uu} (S)_{ui}=\sum_{u\in V_i} (p_k(A))_{uu}\frac{\nu_u^2}{\sum_{v\in V_{i}}\nu_v^2}\\
  & \le W(p_k )\frac{1}{\sum_{v\in V_{i}}\nu_v^2} \sum_{u\in V_i}\nu_u^2=W(p_k), \qquad i=1,\ldots, \chi_k.\\
\end{align*}
Besides, by using interlacing, all the eigenvalues of $B$ must be  between $\lambda(p_k)$ and $p_k(\lambda_1)$. Hence,
$$
\chi_k W( p_k)\ge \sum_{i=1}^{\chi_k}(B)_{ii}=\tr(B)\ge p_k(\lambda_1)+(\chi_k-1)\lambda(p_k)
$$
and the result follows.
\end{proof}

\subsection{Second ratio-type bound for $\chi_k$}

In this section we extend the algebraic bound for $\chi$ by Haemers \cite[Th. 4.1$(ii)$]{H1995} to the distance chromatic number.
\begin{theorem}
\label{theo4.1iiHaemers:extended}
Let $G$ be a $k$-partially walk-regular graph with adjacency matrix eigenvalues $\lambda_1\geq \cdots \geq \lambda_n$. Let $p_k\in \mathbb{R}_k[x]$ such that $\sum_{i=1}^{n} p_k(\lambda_i) = 0$, and let $\Phi_1 \geq\Phi_2 \geq\cdots \geq \Phi_n$ be the eigenvalues of $p_k(A)$. If $\Phi_2>0$, then

\begin{equation}\label{extendedHaemersthm4.1ii}
\chi_k\geq 1-\frac{\Phi_{n-\chi_k+1}}{\Phi_2}.
 \end{equation}
  \end{theorem}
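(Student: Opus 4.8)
The plan is to reuse the operator identity behind Theorem~\ref{theo:extended-EW}, but to extract eigenvalue (rather than rank) information from it by means of Weyl's inequalities. Fix an optimal proper colouring of $G^k$ into $m=\chi_k$ colour classes. Because $G$ is $k$-partially walk-regular and $\sum_{i=1}^n p_k(\lambda_i)=0$, the matrix $p_k(A)$ is symmetric with identically zero diagonal and is supported on the edges of $G^k$; hence the construction used to derive \eqref{toolElphickWocjanextended} applies verbatim and produces unitary matrices $U_1,\dots,U_{m-1}$ with
\[
\sum_{i=1}^{m-1} U_i\,p_k(A)\,U_i^{\ast} = -\,p_k(A).
\]
Note that $\Phi_2>0$ together with $\sum_i\Phi_i=0$ forces $\Phi_1>0>\Phi_n$, so $G^k$ has edges and $m\ge 2$, i.e.\ the sum above is nonempty.

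Next I would analyse the spectrum of both sides. Each summand $U_i\,p_k(A)\,U_i^{\ast}$ is unitarily equivalent to $p_k(A)$, so its ordered eigenvalues are exactly $\Phi_1\ge\cdots\ge\Phi_n$; in particular its second largest eigenvalue is $\Phi_2$. Applying the two-term Weyl inequality $\lambda_{a+b-1}(X+Y)\le\lambda_a(X)+\lambda_b(Y)$ for Hermitian matrices repeatedly (or, equivalently, its $(m-1)$-term version) with every index chosen equal to $2$, the running index telescopes to $2(m-1)-(m-2)=m$, giving
\[
\lambda_{m}\!\left(\sum_{i=1}^{m-1} U_i\,p_k(A)\,U_i^{\ast}\right)\le (m-1)\,\Phi_2 .
\]
On the other hand the left-hand matrix equals $-p_k(A)$, whose eigenvalues in decreasing order are $-\Phi_n\ge\cdots\ge-\Phi_1$, so its $m$-th largest eigenvalue is $-\Phi_{n-m+1}$. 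Combining the two facts yields $-\Phi_{n-m+1}\le(m-1)\,\Phi_2$.

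Finally, since $\Phi_2>0$ I can divide and rearrange $-\Phi_{n-m+1}\le(m-1)\Phi_2$ into $m\ge 1-\Phi_{n-m+1}/\Phi_2$, which with $m=\chi_k$ is exactly \eqref{extendedHaemersthm4.1ii}. The routine parts are the verification that $p_k(A)$ has zero diagonal (identical to the observation in the proof of Theorem~\ref{theo:extended-EW}) and the admissibility check $m\le n$ guaranteeing that $\lambda_m$ is well defined. The one genuinely delicate point, which I expect to be the main obstacle, is the correct bookkeeping of indices in the iterated Weyl inequality: it is precisely the uniform choice of index $2$ that forces $\Phi_2$ into the denominator and the $m$-th largest eigenvalue $-\Phi_{n-m+1}$ of $-p_k(A)$ into the numerator, and any other admissible choice would produce a different (and generally weaker) pair of spectral indices.
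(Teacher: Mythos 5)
Your proof is correct, but it follows a genuinely different route from the paper's. The paper's proof is a one-line reduction to Haemers' eigenvalue-interlacing argument \cite[Th.~4.1(ii)]{H1995}: one partitions $V(G^k)$ into the $\chi_k$ color classes and repeats the interlacing computation with $p_k(A)$ and the corresponding quotient matrix in place of $A$ and $B$, the hypotheses of $k$-partial walk-regularity and $\sum_{i}p_k(\lambda_i)=0$ serving precisely to make the diagonal blocks of $p_k(A)$ on each color class vanish. You instead recycle the pinching identity \eqref{toolElphickWocjanextended}, which the paper establishes in the proof of Theorem~\ref{theo:extended-EW}, and extract eigenvalue rather than rank information from it via iterated Weyl inequalities. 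Your steps check out: $p_k(A)$ has zero diagonal and is supported on $E(G^k)$, so the identity applies with $m=\chi_k$ unchanged; each conjugate $U_i\,p_k(A)\,U_i^{\ast}$ has second largest eigenvalue $\Phi_2$; the index bookkeeping with all Weyl indices equal to $2$ over $m-1$ summands does telescope to $2(m-1)-(m-2)=m\le n$; the $m$-th largest eigenvalue of $-p_k(A)$ is indeed $-\Phi_{n-m+1}$; and $\Phi_2>0$ licenses the final division, so $-\Phi_{n-\chi_k+1}\le(\chi_k-1)\Phi_2$ is exactly \eqref{extendedHaemersthm4.1ii}. What the paper's route buys is brevity and a transparent analogy with the classical $k=1$ statement of Haemers; what your route buys is a unified treatment in which both Theorem~\ref{theo:extended-EW} and Theorem~\ref{theo4.1iiHaemers:extended} follow from the single identity \eqref{toolElphickWocjanextended} --- one by a rank argument, one by Weyl's inequalities --- which is essentially the technique of \cite{ew2013}, and it avoids setting up the quotient-matrix interlacing machinery a second time.
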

 \begin{proof}
An analogous interlacing argument as the one used in as in \cite[Th. 4.1 (ii)]{H1995} applies here, where instead of the adjacency matrix $A$ and the quotient matrix $B$, now we consider linear combinations of both matrices, $p_k(A)$ and $p_k(B)$.
 \end{proof}

\section{Concluding remarks}

We should note that computing our eigenvalue bounds (using the MILPs) is significantly faster than solving the SDP of the Lov\'asz theta bound, and in many cases our bounds perform fairly good, as shown in Table \ref{tableMILP}.

The optimization of the first inertial-type bound (\ref{eq:thm1}) using the MILP \eqref{MILP:inertia}   has special interest since our first inertial-type bound (\ref{eq:thm1}) provide an upper bound for the quantum $k$-independence number [Theorem 7, \cite{wea2020}], which is, in general, not known to be a computable parameter.

While for distance-regular graphs one can use the celebrated linear programming bound by Delsarte \cite{d1973} on $G^k$ in order to bound $\alpha_k$, our inertial-type bound (\ref{eq:thm1}) and its MILP \eqref{MILP:inertia} are more general since they can also be applied to vertex-transitive graphs which are not distance-regular, or in general, to walk-regular graphs which are not distance-regular.

For walk-regular graphs, it is expected that our first inertial bound implementation \eqref{MILP:inertiaWR} does not outperform the ratio-type bound implemented using the so-called minor polynomials \cite{fiol20}. This is due to the fact that our MILP \eqref{MILP:inertiaWR} uses a linear combination of the eigenvalue multiplicities which is more restrictive than the multiplicity linear combination used with the minor polynomials. However, our first inertial-type bound implementation with the MILP \eqref{MILP:inertia} is more general than the ratio-type bound implementation from \cite{fiol20}, since the latter requires walk-regularity while our first inertial-type bound (\ref{eq:thm1}) and its MILP \eqref{MILP:inertia} apply to general graphs.

We end with two open problems that we feel are most natural to try
next. The same MILP method as we use in Sections \ref{subsec:opt-first} and \ref{optsecondinertialbound} could be useful to find the target polynomial in other graphs and/or for other values of $k$. Some graph candidates would be vertex-transitive graphs which are not distance-regular (since otherwise one can just use Delsarte LP bound). Finally, note that our MILP formulations to optimize the spectral bounds for $\alpha_k$ and $\chi_k$ have a polynomial number of input variables, hence it would be interesting to study whether these formulations admit an algorithm in polynomial time \cite{lenstra}.

\section*{Acknowledgments}
The research of A. Abiad is partially supported by the FWO grant 1285921N. A. Abiad and M.A. Fiol gratefully acknowledge the support from DIAMANT. B. Nogueira acknowledges grant PRPG/ADRC from UFMG. The authors would also like to thank Anurag Bishnoi for noticing a tight family for our bound \eqref{inertialikeboundk2distancechromaticnumber}.

\end{document}